\definecolor{orange}{rgb}{0.9,0.4,0}
\definecolor{darkgreen}{rgb}{0.1,0.45,0.1}
\definecolor{purple}{rgb}{0.6,0,0.6}
\newtheorem{assumption}{Assumption}
\newcommand{\trace}{T}
\newcommand{\psolve}{F}
\newcommand{\intv}{P}
\newcommand{\map}{N}
\newcommand{\mmap}{\widetilde{N}}
\newcommand{\contractmap}{R}
\newcommand{\bigchi}{\raisebox{2pt}{\mbox{\large$\chi$}}}
\newcommand{\potbound}{\Psi}
\newcommand{\eqnnum}{}
\def\one{(1)}
\def\two{(2)}
\newcommand{\fo}[1][]{f^{\one{#1}}}
\newcommand{\ft}[1][]{f^{\two{#1}}}
\newcommand{\gone}{g^{\one}}
\newcommand{\gtwo}{g^{\two}}
\newcommand{\rone}{r^{\one}}
\newcommand{\rtwo}{r^{\two}}
\newcommand{\uone}{u^{\one}}
\newcommand{\utwo}{u^{\two}}
\newcommand{\Mone}{M^{\one}}
\newcommand{\Mtwo}{M^{\two}}
\newcommand{\qone}{q^{\one}}
\newcommand{\qtwo}{q^{\two}}
\newcommand{\comp}{\widetilde{Z}}
\newcommand{\compv}{\widetilde{V}}
\newcommand{\compvv}{\widetilde{V}}
\newcommand{\normweight}{\eta_{\veps}}
\newcommand{\edummy}{\Xi}
\newcommand{\fot}[1][]{\widetilde{f}^{(1){#1}}}
\newcommand{\ftt}[1][]{\widetilde{f}^{(2){#1}}}
\newcommand{\ftwotilde}[1][]{
	\ifthenelse{\equal{#1}{}}
	   {\ftt}
	   {\ftt[,#1]}}
\newcommand{\fonetilde}[1][]{
	\ifthenelse{\equal{#1}{}}
	   {\fot}
	   {\fot[,#1]}}
\newcommand{\ftwo}[1][]{
	\ifthenelse{\equal{#1}{}}
	   {\ft}
	   {\ft[,#1]}}
\newcommand{\fone}[1][]{
	\ifthenelse{\equal{#1}{}}
	   {\fo}
	   {\fo[,#1]}}
\newcommand{\veps}{\varepsilon}
\newtheorem{remark}{Remark}
\title{Analysis of a new implicit solver for a semiconductor model
\thanks{This material is based upon work supported by the U.S. Department of Energy, Office of Science, Office of Advanced Scientific Computing Research, as part of their Applied Mathematics Research Program. The work was performed at the Oak Ridge National Laboratory, which is managed by UT-Battelle, LLC under Contract No. De-AC05-00OR22725. The United States Government retains and the publisher, by accepting the article for publication, acknowledges that the United States Government retains a non-exclusive, paid-up, irrevocable, world-wide license to publish or reproduce the published form of this manuscript, or allow others to do so, for the United States Government purposes. The Department of Energy will provide public access to these results of federally sponsored research in accordance with the DOE Public Access Plan (\texttt{http://energy.gov/downloads/doe-public-access-plan}).
}}
\author{Victor P. DeCaria\thanks{Computational Mathematics Group, Computer Science and Mathematics Division, Oak Ridge National Laboratory, Oak Ridge, TN 37831, USA(\email{decariavp@ornl.gov}).}
\and Cory D. Hauck\thanks{Computational Mathematics Group, Computer Science and Mathematics Division, Oak Ridge National Laboratory, Oak Ridge, TN 37831, USA and 
Mathematics Department, University of Tennessee, Knoxville, TN 37996, USA (\email{hauckc@ornl.gov}).} 
\and M. Paul Laiu\thanks{Computational Mathematics Group, Computer Science and Mathematics Division, Oak Ridge National Laboratory, Oak Ridge, TN 37831, USA  (\email{laiump@ornl.gov}).}}
\begin{document}
\maketitle

\begin{abstract}
We present and analyze a new iterative solver for implicit discretizations of a simplified Boltzmann-Poisson system. The algorithm builds on recent work that incorporated a sweeping algorithm for the Vlasov-Poisson equations as part of nested inner-outer iterative solvers for the Boltzmann-Poisson equations. The new method eliminates the need for nesting and requires only one transport sweep per iteration. It arises as a new fixed-point formulation of the discretized system which we prove to be contractive for a given electric potential. We also derive an accelerator to improve the convergence rate for systems in the drift-diffusion regime. We numerically compare the efficiency of the new solver, with and without acceleration, with a recently developed nested iterative solver.
\end{abstract}
\section{Introduction}
Electron transport in semiconductors with negligible electron-electron iteration can be modeled by a simplified Boltzmann-Poisson system of equations \cite{markowich2012semiconductor} of the form
\begin{subequations}\label{eqn:model}
\begin{gather}
\varepsilon \frac{\partial f_{\varepsilon}}{\partial  t}   + v \cdot\nabla_xf_{\varepsilon} + \nabla_x \Phi_{\varepsilon} \cdot \nabla_vf_{\varepsilon} = \frac{\omega}{\varepsilon} (M\rho_{\veps} - f_{\veps}) + \varepsilon q, 
\label{eqn:model_vlasov} \\
\Delta \Phi_{\varepsilon} = \rho_{\veps} - D ,\quad M=M_{\Theta}(v)= (2\pi\Theta)^{-d/2}e^{-v^2/2\Theta}, \quad \rho_{\veps} = \int_V f_{\veps}\, dv, \label{eqn:model_poisson}  \\
f_{\varepsilon}|_{t = 0} = f^0, \quad f_{\veps}|_{\partial Z_-} = f_{-},  \quad  \Phi_{\veps}|_{\partial X} = \potbound. \label{eqn:icbc}
\end{gather}
\end{subequations}
Here $f_{\veps} = f_{\veps} (x,v,t)$ denotes the electron distribution at position $x\in X \subset \bbR^d$, velocity $v\in V=\bbR^d$, and time $t\in\bbR^+$; $\Phi_{\veps} = \Phi_{\veps}(x,t)$ denotes the electric potential; and $\nabla_x \Phi_{\veps}$ is the electric field.  
The linear operator $f_{\veps} \mapsto M\rho_{\veps} -f_{\veps}$ is a simplified collision operator that models electron scattering with the semiconductor background, a process which drives $f_{\veps}$ towards the local equilibrium state $M \rho_{\veps}$, where $M$ is a Maxwellian whose  temperature is given by  the constant background lattice temperature $\Theta >0$. The strength of scattering is determined by the parameter $\veps>0$, while variations  in space and time are specified by the (scaled) collision frequency $\omega = \omega(x,t) \in [0,1]$.  In some situations, we drop the superscript $\varepsilon$ when there is no confusion.  

The external volumetric source $q$ is assumed to be given, as are the initial data $f^0$ and inflow data $f_{-}$.  The latter is defined on the inflow boundary $\partial Z_-$ of the phase space $Z=X\times V$:
\label{eqn:bdrys}
\begin{gather}
\partial Z_- = \{z \in  \partial Z ~|~ a(z) \cdot n_z(z) < 0\},
\end{gather}
where $n_z(z)$ is the outward unit normal to $Z$ at $z = (x,v)$ and $a(z) = (v,\nabla_x \Phi_{\veps}(x,t))$.
The outflow boundary $\partial Z_+$ is defined analogously with $z$ such that $a(z) \cdot n_z(z) > 0$.

The Poisson equation in \eqref{eqn:model_poisson} couples $\Phi_\veps$ to $\rho_\veps$ and the doping profile $D = D(x)$, which is fixed in time. Dirichlet boundary conditions for $\Phi_{\veps}$ are given by a function $\potbound$.

It is difficult to construct general purpose methods for \eqref{eqn:model}, in part because there may be large spatio-temporal variations in $\omega$.  When $\omega \approx  1$ and $\veps$ is small, $f_\veps \approx M \rho_0$, where $\rho_0$ satisfies a drift-diffusion equation that is independent of $\veps$ \cite{masmoudi2007diffusion,abdallah2004}.  In such cases, various semi-implicit strategies can be used to correctly achieve this limit; see for example \cite{jin2000discretization,schmeiser1998convergence, dimarco2014implicit}.   However if $\omega \equiv 0$  and $\veps$ is small, then \eqref{eqn:model} reverts to a stiff Vlasov-Poisson system.  The challenge of simulating \eqref{eqn:model} in both settings simultaneously was discussed in detail in \cite{laiu2019fast} where a fully implicit time discretization was proposed.  A solver strategy was then constructed using three basic ingredients, all of which were inspired by approaches developed for radiation transport, which is often simpler because the particles are neutral \cite{Adams-Larsen-2002,larsen2010advances}.  The first ingredient is a fixed-point formulation in terms of the variable $\rho$ instead of $f$, which significantly reduces the memory footprint for iterative methods that use multiple copies of data, such as Krylov methods and Anderson Acceleration.   The second ingredient is a sweeping strategy that inverts the operator in \eqref{eqn:model} under the assumption that $\rho$ is fixed.  This strategy was developed in \cite{Garrett2018AFS} and, unlike the radiation transport case, requires a special domain decomposition to handle the fact that characteristics of \eqref{eqn:model} may form cycles in phase space.  The resulting method then iterates over unknowns on the boundary between domains, rather than the full phase space.  The third ingredient is a preconditioner to improve efficiency near the drift-diffusion limit, in which case the  drift-diffusion equation for $\rho_0$ is the natural choice.

In the present paper, we first perform temporal stability analysis to help justify the fully implicit approach. In particular, we provide a continuous proof of weighted $L^2$ stability. This result  is similar to the one in \cite{schmeiser1998convergence} in that it is does not degenerate as $\veps \to 0$.  However, the norm used here is independent of the potential and can therefor be easily generalized to algebraically stable time stepping methods with a time dependent electric field.    

We then develop a new, tightly coupled iterative method that improves on the nested approach taken in \cite{laiu2019fast} which used an outer loop to iterate over $\rho$ and an inner loop to iterate over the  boundary unknowns, denoted here by $\widetilde{f}$, via the sweeping procedure.  Instead, we formulate a fixed-point strategy for the couple $(\rho, \widetilde{f})$.  The motivation for this strategy is that, since sweeping is the dominant computational cost, we should extract as much work from one sweep as possible.  Thus, once a sweep provides a guess for $f$ on the entire phase space, both $\rho$ and $\widetilde{f}$ are updated. The couple $(\rho, \widetilde{f})$ still has a smaller footprint than the full phase space, making the approach amenable to  Krylov methods or Anderson Acceleration which usually converge faster than fixed-point iterations.  Using the stability estimates in \cite{schmeiser1998convergence}, we prove that the new formulation is a contraction mapping in the linear case of a given electric field, thereby guaranteeing convergence of the solver.  Numerical tests for one-dimensional geometries show that methods based on the new formulation are around three to five times faster than the nested iterative approach.

The remainder of this paper is organized as follows. In Section \ref{sec:temporal_stability}, we analyze the stability of the temporally discretized system for  both implicit Euler and the second order backward differentiation formula (BDF2). In Section \ref{sec:var_form}, we introduce the phase space discretization and formulate the resulting system as a lower dimensional fixed-point problem. We then prove that for a prescribed electric field, the fixed-point map is a contraction. In Section \ref{sec:solver_strategies}, we give a brief overview of the nonlinear solver strategies. Namely, we recall Anderson Acceleration for fixed-point maps, and we derive a new drift diffusion accelerator. Numerical results are presented in Section \ref{sec:numerical_tests}.

\section{Temporal stability\label{sec:temporal_stability}}

The goal of this section is to provide stability results that support the use of implicit time discretizations of \eqref{eqn:model}.  At the continuous level, stability of the entropy density $f\log(f /M) - f$ was established in \cite{Levermore1998MomentCH}.  While extending this result to an implicit Euler discretization is straight-forward, doing so for  higher-order time stepping schemes is less obvious, unless one resorts to the nonlinear, space-time Galerkin framework proposed in \cite{BARTH20063311}.  While elegant, this approach can be very expensive in practice.  

More conventional stability in weighted $L^2$ norms is also challenging.   The main difficulty, as observed for example in \cite{masmoudi2007diffusion}, is that \eqref{eqn:model} contains terms which are stable in different inner product spaces, but not simultaneously. Let 
\begin{equation}
\|f\|= \left(\int_X \int_V |f|^2 dvdx \right)^{1/2}
\quad \text{and} \quad
\|f\|_{M^{-1}} = \left(\int_X \int_V |f|^2 M^{-1} dvdx \right)^{1/2}
\end{equation}
be the standard and weighted $L^2$ norms on $Z$.  While stability is immediate in the extreme cases of pure advection ($\omega \equiv 0$) in the standard $L^2$ norm or with no electric field $\nabla_x \Phi = 0$ in the weighted norm, 
the advection and scattering operators are not monotone in the weighted and standard $L^2$ spaces, respectively.  This challenge was addressed in \cite{schmeiser1998convergence, ringhofer2002} by introducing a weight that depends explicitly on the electric potential, resulting in a time-dependent entropy density $f^2e^{-\Phi}M^{-1}$. A Gr\"onwall estimate independent of $\veps$ is derived under a regularity assumption on $\partial_t \Phi$. However, there is not a straightforward extension to an energy argument in the time discrete setting.  In  Section \ref{sec:continuous_stability} below, we prove an alternative stability estimate in the $\|f\|_{M^{-1}}$ norm that is $\veps$ independent. This estimate can then be easily extended to the time discrete case, as shown in Section \ref{sec:time_discrete_stab}.

\subsection{The continuous case\label{sec:continuous_stability}}

We begin by studying the continuous time case.  Our strategy is based on the following observation:  If  $\veps$ is sufficiently small, the collision kernel becomes dominant, and stability in the weighted $L^2$ space is obtainable. On the other hand, if $\veps$ is large, collisions become insignificant so that standard $L^2$ stability is obtained. Thus we effectively glue together bounds from these two regimes to obtain an $\veps$ independent bound.
We make the following assumptions.
\begin{assumption}\label{assump:l2_stability_thm}
The following hold.
\begin{enumerate}[label=(\alph*)]
\item \label{assump:non_vanishing_omega} The collision frequency does not vanish, $0 < \omega_{\min} \leq \omega$. 
\item \label{assump:bdd_field} $\|\nabla_x \Phi\|_{\infty} := \sup_{x,t,\veps} |\nabla_x \Phi_\veps(x,t)| <  \infty$.
\item \label{assump:eps_independent_init} There exists a constant $C_{0}$ (independent of $\veps$) such that $\|f^0\|_{M^{-1}}^2 \leq C_0 < \infty.$ 
\item \label{assump:compact_v} $f_\veps$ has compact support in $v$, and vanishes for $|v| > v_{\rm{max}} > 0$. 
\item \label{assump:zero_x} 
Zero inflow, that is, $f_- = 0$ in \eqref{eqn:icbc}.
\item \label{assump:no_source} The external source is $q=0$.
\end{enumerate}
\end{assumption}
Assumption \ref{assump:l2_stability_thm}\ref{assump:non_vanishing_omega} is generally assumed to obtain the drift-diffusion limit as $\veps\rightarrow 0$ \cite{masmoudi2007diffusion}. The regularity assumption  \ref{assump:l2_stability_thm}\ref{assump:bdd_field} is weaker than what is used to guarantee a mild solution in \cite{schmeiser1998convergence}, but stronger than the $H^1(X)$ regularity established in \cite{masmoudi2007diffusion} in the context of re-normalized  solutions. Assumption \ref{assump:l2_stability_thm}\ref{assump:eps_independent_init} ensures the Gr{\"o}nwall constant is independent of $\veps$. 
Assumption \ref{assump:l2_stability_thm}\ref{assump:compact_v} is reasonable in our setting since the computational domain must always be bounded.  Indeed, most schemes typically set $v_{\rm{max}} \Theta^{-1/2} \lesssim 10$.  However, this assumption is not physical at the continuum level, since the equilibrium solution $M\rho$ has unbounded support.  It is possible to replace Assumption \ref{assump:l2_stability_thm}\ref{assump:compact_v} with a condition on the decay of $f$ with respect to $v$, resulting in more intricate but essentially similar proofs.  We assume \ref{assump:l2_stability_thm}\ref{assump:zero_x} for simplicity, although extension to nonzero incoming data is an interesting problem. Assumption \ref{assump:l2_stability_thm}\ref{assump:no_source} is assumed for simplicity. The inclusion of external source $q$ would simply alter the growth factor in the stability result by an additive constant depending on $\|q\|$.
\begin{theorem}\label{thm:l2_continuous_stability}
Under Assumption \ref{assump:l2_stability_thm}, for all $T>0$, there exists a constant $C_1$ independent of $\veps$ such that
\begin{gather}\label{eqn:energy_about_mean_bound}
\|f_{\veps}(T)\|_{M^{-1}}^2  
\leq
C_0\exp\left(C_1 T\right). 
\end{gather}

\end{theorem}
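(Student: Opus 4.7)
The plan is to test \eqref{eqn:model_vlasov} against $fM^{-1}$ and integrate over the phase space $Z$. The time derivative contributes $\tfrac{\veps}{2}\frac{d}{dt}\|f\|_{M^{-1}}^2$, the transport term $v\cdot\nabla_x f$ yields a nonnegative outflow boundary contribution (the inflow integral vanishes by Assumption \ref{assump:l2_stability_thm}\ref{assump:zero_x}, and $M^{-1}$ is independent of $x$), and the collision term, using the orthogonality $(M\rho,\,f-M\rho)_{M^{-1}}=0$, produces the dissipative contribution $-\frac{\omega}{\veps}\|f-M\rho\|_{M^{-1}}^2$. These three pieces are routine; everything hinges on the electric field term.

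Since $f$ has compact support in $v$ by Assumption \ref{assump:l2_stability_thm}\ref{assump:compact_v}, integration by parts in $v$, combined with $\nabla_v M^{-1} = (v/\Theta)M^{-1}$, yields
\begin{equation*}
\int_Z \nabla_x\Phi\cdot\nabla_v f\, fM^{-1}\,dv\,dx = -\frac{1}{2\Theta}\int_Z (\nabla_x\Phi\cdot v)\,f^2 M^{-1}\,dv\,dx.
\end{equation*}
I would then decompose $f = M\rho + (f - M\rho)$ and expand $f^2 M^{-1}$. The $(M\rho)^2 M^{-1} = M\rho^2$ piece vanishes because $\int_V vM\,dv = 0$. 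The cross term is controlled by Cauchy--Schwarz in $v$ (using $\int_V v^2 M\,dv = d\Theta$) followed by Cauchy--Schwarz in $x$ (using the Cauchy--Schwarz consequence $\|\rho\|_{L^2(X)}\leq\|f\|_{M^{-1}}$), giving a bound of the form $\|\nabla_x\Phi\|_\infty\sqrt{d/\Theta}\,\|f\|_{M^{-1}}\|f-M\rho\|_{M^{-1}}$. The remaining quadratic piece is bounded via $|v|\leq v_{\max}$ by $\tfrac{v_{\max}\|\nabla_x\Phi\|_\infty}{2\Theta}\|f-M\rho\|_{M^{-1}}^2$.

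Setting $U := \|f\|_{M^{-1}}$ and $W := \|f - M\rho\|_{M^{-1}}$, the estimates above assemble into
\begin{equation*}
\frac{\veps}{2}\frac{dU^2}{dt} \leq aUW + bW^2 - \frac{\omega_{\min}}{\veps}W^2,
\end{equation*}
where $a,b>0$ depend only on $\|\nabla_x\Phi\|_\infty$, $\Theta$, $d$, and $v_{\max}$. To render the Grönwall rate $\veps$-independent I would split on $\veps$ relative to the threshold $\veps_0 := \omega_{\min}/(2b)$. For $\veps\leq\veps_0$, the collision dissipation dominates: Young's inequality with weight $\delta = \omega_{\min}/(2\veps)$ absorbs $aUW$ and $bW^2$ into $\tfrac{\omega_{\min}}{\veps}W^2$, leaving $\tfrac{dU^2}{dt}\leq(a^2/\omega_{\min})\,U^2$. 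For $\veps>\veps_0$, the Pythagorean bound $W\leq U$ (from $U^2 = \|\rho\|_{L^2(X)}^2 + W^2$) and discarding the dissipation give $\tfrac{dU^2}{dt}\leq (4b(a+b)/\omega_{\min})\,U^2$. Both rates are $\veps$-independent, so Grönwall's inequality combined with Assumption \ref{assump:l2_stability_thm}\ref{assump:eps_independent_init} yields \eqref{eqn:energy_about_mean_bound}.

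The main obstacle is that the field term is not signed in either the standard or the weighted $L^2$ inner product, and in each norm individually it cannot be controlled uniformly in $\veps$. The saving structure is that the local-equilibrium component $M\rho$ of $f$ is annihilated by $\int_V vM\,dv=0$, so the dangerous part of the field term involves only the non-equilibrium remainder $f-M\rho$---and this is precisely the component the collision operator dissipates. The case-split on $\veps$ is then the natural mechanism for gluing the collision-dominated and advection-dominated regimes into one $\veps$-independent bound.
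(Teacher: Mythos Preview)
Your proof is correct and follows essentially the same strategy as the paper: test against $fM^{-1}$, decompose $f=M\rho+g$ so that the equilibrium piece of the field term vanishes via $\int_V vM\,dv=0$, and case-split on $\veps$ to glue together the collision-dominated and advection-dominated regimes. The only cosmetic differences are that the paper handles the large-$\veps$ case without first decomposing (bounding the full field term directly by $C_2\|f\|_{M^{-1}}^2$ and using $C_2<\veps C_1$), and that it bounds the cross term via $|v|\leq v_{\max}$ rather than the Maxwellian moment $\int_V |v|^2 M\,dv=d\Theta$ you use.
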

\begin{proof} We temporarily suppress the $\varepsilon$ subscripts.
Our goal is to show that
\begin{equation}\label{eqn: gron_diff}
 \frac{d}{dt}\|f\|_{M^{-1}}^2< C_1 \|f\|_{M^{-1}}^2,
\end{equation}
from which \eqref{eqn:energy_about_mean_bound} follows.  
Let $\compvv = \{v \in V\,\,|\,\, |v| \leq v_{\max}\}$ and $g = f - M \rho$. Multiplication of \eqref{eqn:model_vlasov} by $f/M$ and integration over $v \in V$ and $x \in X$ gives
\begin{equation}\label{eqn:weighted energy}
\veps\frac{d}{dt}\|f\|_{M^{-1}}^2
+  \int_X \nabla_x \Phi \cdot \left(\int_{\compvv} \frac{\nabla_v f^2}{M}  dv \right) dx 
+ \int_{X} \frac{2\omega}{\veps} \left(\int_{V}\frac{g^2}{M}dv\right)dx \leq 0. 
\end{equation}
The inequality arises from neglecting $\int_{\partial Z_+}v\cdot n_xf^2M^{-1}ds$ from the divergence theorem.
Integration by parts and writing $f = g + M\rho$ gives
\begin{equation}\label{eqn:int_by_parts_v}
\int_{\compvv} \frac{ \nabla_v f^2}{M}  dv 
	=  -\int_{\compvv} \frac{v}{\Theta} \frac{f^2}{M} dv 
	=  -\int_{\compvv} \frac{v}{\Theta}  \frac{g^2}{M} dv - 2 \int_{\compvv} \frac{v\rho M}{\Theta}  \frac{g}{M}dv.
\end{equation}
We consider two cases.  Let
 \begin{equation}
C_1 =  \frac{\|\nabla_x \Phi\|_{\infty}^2 v_{\max}^2}{\Theta^2\omega_{\min}}
\quad \text{and} \quad
C_2 = \frac{\|\nabla_x \Phi\|_{\infty}v_{\max}}{\Theta}.
 \end{equation}

\noindent
\textbf{Case 1:} Assume $\veps > C_2^{-1} \omega_{\text{min}}$.  Then $\frac{C_2}{C_1}= \frac{\omega_{\min}}{ C_2}< \veps$, so the first equality in \eqref{eqn:int_by_parts_v} implies that
\begin{multline} \label{eqn:case_1_bound}
\left| \int_{X} \nabla_x \Phi \cdot \left(\int_{\compvv} \frac{\nabla_v f^2}{M}  dv \right) dx \right| 
=\left| \int_{X} \nabla_x \Phi \cdot \left(\int_{\compvv} \frac{v}{\Theta} \frac{f^2}{M} dv \right) dx \right| \\
 \leq  \frac{\|\nabla_x \Phi\|_{\infty}v_{\max}}{\Theta} \|f\|_{M^{-1}}^2
= C_2 \|f\|_{M^{-1}}^2 
< \veps C_1 \|f\|_{M^{-1}}^2.
\end{multline}
Applying  \eqref{eqn:case_1_bound} to \eqref{eqn:weighted energy} gives \eqref{eqn: gron_diff}.

\noindent
\textbf{Case 2:} Assume $\veps \leq C_2^{-1} \omega_{\text{min}}$.
  By way of  Young's inequality, the second equality in \eqref{eqn:int_by_parts_v} implies that 
\begin{equation}
\begin{aligned}\label{eqn:case_2_bound}
& \left| \int_{X} \nabla_x \Phi \cdot \left(\int_{\compvv} \frac{\nabla_v f^2}{M}  dv \right) dx \right|  \\
 \leq & \left |\int_{X} \nabla_x \Phi \cdot \left(\int_{\compvv} \frac{v}{\Theta}  \frac{g ^2}{M} dv\right) dx  \right|
+\left |\int_X \nabla_x \Phi \cdot \left(	2 \int_{\compvv} \frac{v\rho M^{1/2}}{\Theta}  \frac{g }{M^{1/2}}dv\right) dx \right|\\
\leq & \frac{\|\nabla_x \Phi\|_{\infty}v_{\max}}{\Theta} \|g \|_{M^{-1}}^2 
	+ \frac{\|\nabla_x \Phi\|_{\infty}v_{\max}}{\Theta} \left(\frac{C_2 \veps}{\omega_{\text{min}}} \int_{X} \int_{\compvv} \rho^2 M dv dx + \frac{\omega_{\text{min}}} {C_2  \veps}\|g \|_{M^{-1}}^2\right)  \\
 \leq & \frac{C^2_2  \veps }{\omega_{\text{min}}} \int_X \rho^2 dx 
	+ \left(C_2+ \frac{\omega_{\text{min}}} {\veps}\right )\|g \|_{M^{-1}}^2 
	\leq   C_1 \veps \|f \|_{M^{-1}}^2
	+ \left(C_2+ \frac{\omega_{\text{min}}} {\veps}\right )\|g \|_{M^{-1}}^2,
\end{aligned}
\end{equation}
where in the last line, we have used the fact that $C_1 = \frac{C^2_2  }{\omega_{\text{min}}}$ and  $\int_X \rho^2dx\leq\|f\|_{M^{-1}}^2$.
Applying the bound from \eqref{eqn:case_2_bound} to \eqref{eqn:weighted energy} and gathering terms gives
\begin{gather}
\veps \frac{d}{dt}\|f\|_{M^{-1}}^2
+  \left(\frac{\omega_{\text{min}}}{\veps}  - C_2\right)\|g \|_{M^{-1}}^2
 \leq C_1 \veps \|{f} \|_{M^{-1}}^2,
\end{gather}
from which \eqref{eqn: gron_diff} follows.
\end{proof}

\begin{remark}
From the proof of Theorem \ref{thm:l2_continuous_stability}, we can also show for $\varepsilon$ sufficiently small that
 \begin{gather*}
 \sqrt{ \int_{0}^T \|f_{\veps} - M\rho_{\veps}\|_{M^{-1}}^2 dt} \leq C \veps,
\end{gather*}
where $C = C(T, \omega, \Phi, \Theta, f^0)$, which is consistent with the expected behavior in the drift-diffusion limit.
\end{remark}

\begin{remark}
While a result similar to Theorem \ref{thm:l2_continuous_stability} was shown in \cite{schmeiser1998convergence}, the energy therein had $\Phi$-dependent weights. As a result, $\Phi$ was assumed to be fixed in time to prove stability and convergence in the time discrete case. The proof of Theorem~\ref{thm:l2_continuous_stability} avoids a $\Phi$-dependent integrating factor, which means it can be modified with $G$-Stability analysis \cite{dahlquist1978} for linear multistep methods, as demonstrated in the next section.
\end{remark}

\subsection{The time discrete case\label{sec:time_discrete_stab}}

In this section, we analyze the properties of the discrete time, continuous space problem to motivate the use of fully implicit methods. The result does not follow immediately when the phase space is discretized unless the spatial discretization is tailored to preserve the result, or extra assumptions are applied. However, the analysis provides insight into the expected stability for the fully discrete system.

Given $\Delta t$, we define $t^n = n\Delta t$ and $f^{n}$ to be an approximation of $f(t^n)$. The known source at time $t^n$ is denoted $q^{n}$. The main time discrete equations we will consider are implicit Euler and BDF2, which are given in order as
\begin{equation}\label{eqn:time_discrete}
	\veps\frac{f^{n+1}-f^{n}}{\Delta t}  + v \cdot\nabla_xf^{n+1} + \nabla_x \Phi^{n+1} \cdot \nabla_vf^{n+1} = \frac{\omega}{\veps}(M\rho^{n+1} - f^{n+1}) + \varepsilon q^{n+1},
\end{equation}
and
\begin{equation}\label{eqn:time_discrete_bdf2}
	\veps\frac{3f^{n+1}-4f^{n} + f^{n-1}}{2\Delta t}  + v \cdot\nabla_xf^{n+1} + \nabla_x \Phi^{n+1} \cdot \nabla_vf^{n+1} = \frac{\omega}{\veps}(M\rho^{n+1} - f^{n+1}) + \varepsilon q^{n+1},
\end{equation}
where, in both cases,
\begin{equation}\eqnnum
	\Delta \Phi^{n+1} = \rho^{n+1} - D,\qquad \rho^{n+1} = \int f^{n+1} dv.
\end{equation}
BDF2 is chosen since it is both $L$-Stable and $G$-Stable \cite{wanner1996solving}, which is important in the infinitely stiff limit as $\veps\rightarrow 0$.  However, other reasonable choices exist.

\begin{theorem}[Stability of implicit Euler]\label{thm:l2_stab}
Let $T=N\Delta t$ be the final time. Suppose that $f^n$ solves \eqref{eqn:time_discrete} for all $n \in \{0,1,...,N\}$. Under Assumption \ref{assump:l2_stability_thm} and using the same constants $C_0$, $C_1$, and $C_2$ as in Section \ref{sec:continuous_stability}, $\Delta t < C_1^{-1}$ implies
\begin{equation}\label{eqn:gronwall_discrete}
\frac{1}{2}\|f^n\|_{M^{-1}}^2 \leq C_0\left(1 - C_1 \Delta t  \right)^{-N}. 
\end{equation}

\end{theorem}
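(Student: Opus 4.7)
The plan is to transcribe the continuous proof of Theorem \ref{thm:l2_continuous_stability} to the time-discrete setting, using the polarization identity in place of the exact time derivative of the weighted energy, and reusing the two-case bound on the electric-field term essentially verbatim. The key structural point is that the weight $M^{-1}$ is independent of $\Phi$, so the entire argument transfers without having to control a discrete analogue of $\partial_t\Phi$.

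The concrete steps are as follows. \emph{Step 1 (discrete energy identity):} multiply \eqref{eqn:time_discrete} by $2 f^{n+1}/M$ and integrate over $Z$. The polarization identity
\begin{equation*}
2(f^{n+1}-f^n)\,f^{n+1} \;=\; \bigl[(f^{n+1})^2 - (f^n)^2\bigr] + (f^{n+1}-f^n)^2
\end{equation*}
converts the time-difference term into $\tfrac{\veps}{\Delta t}\bigl(\|f^{n+1}\|_{M^{-1}}^2 - \|f^n\|_{M^{-1}}^2\bigr)$ plus a nonnegative increment-dissipation that I discard; the outflow flux obtained from $\int v\cdot\nabla_x f^2/M\,dv\,dx$ is likewise nonnegative by Assumption \ref{assump:l2_stability_thm}\ref{assump:zero_x} and is dropped. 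What remains is the exact discrete analogue of \eqref{eqn:weighted energy} at time level $n+1$. \emph{Step 2 (reuse the field estimates):} the electric-field contribution is a pure spatial/velocity integral at time $t^{n+1}$, so the integration by parts in \eqref{eqn:int_by_parts_v} and the two-case bounds \eqref{eqn:case_1_bound}, \eqref{eqn:case_2_bound} apply verbatim with $f$ replaced by $f^{n+1}$; the scattering dissipation $\tfrac{2\omega}{\veps}\|f^{n+1}-M\rho^{n+1}\|_{M^{-1}}^2$ that already sits on the left-hand side absorbs the bad $g^{n+1}$ terms in Case 2 just as in the continuous argument. These estimates yield
\begin{equation*}
\frac{\veps}{\Delta t}\bigl(\|f^{n+1}\|_{M^{-1}}^2 - \|f^n\|_{M^{-1}}^2\bigr) \;\leq\; C_1\,\veps\,\|f^{n+1}\|_{M^{-1}}^2.
\end{equation*}
\emph{Step 3 (discrete Gr\"onwall):} this rearranges to $(1-C_1\Delta t)\|f^{n+1}\|_{M^{-1}}^2 \leq \|f^n\|_{M^{-1}}^2$; under the hypothesis $\Delta t < C_1^{-1}$ the prefactor is strictly positive, and iterating from $n=0$, together with Assumption \ref{assump:l2_stability_thm}\ref{assump:eps_independent_init}, delivers \eqref{eqn:gronwall_discrete}.

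The only step requiring genuine thought is Step 1: the polarization identity gifts a nonnegative ``increment-dissipation'' term that can simply be discarded, which is precisely the mechanism that makes implicit Euler $L$-stable in the weighted norm. Everything after that is essentially a copy of the continuous argument, made possible only because Theorem \ref{thm:l2_continuous_stability} was proved with a time-independent weight --- the advantage emphasized in the second remark above in contrast to the $\Phi$-weighted estimate of \cite{schmeiser1998convergence}. I do not anticipate any further obstacle; any minor mismatch in the overall multiplicative factor in \eqref{eqn:gronwall_discrete} (such as the leading $\tfrac{1}{2}$) is harmless and can be absorbed into $C_0$.
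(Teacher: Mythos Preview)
Your proposal is correct and follows essentially the same approach as the paper: multiply \eqref{eqn:time_discrete} by $f^{n+1}/M$, use the polarization inequality $(f^{n+1}-f^n)f^{n+1}\ge \tfrac12\bigl((f^{n+1})^2-(f^n)^2\bigr)$ for the time-difference term, repeat the two-case estimate from Theorem~\ref{thm:l2_continuous_stability} verbatim at level $n+1$, and conclude with a discrete Gr\"onwall inequality under $\Delta t < C_1^{-1}$. The paper's proof sketch is no more detailed than yours.
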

\begin{proof}

After multiplying \eqref{eqn:time_discrete} by $f^{n+1}/M$ and integrating in $x$ and $v$, the proof proceeds essentially the same as the proof of Theorem \ref{thm:l2_continuous_stability}.  The only substantial differences are the treatment of time differences and the use of a discrete Gr{\"o}nwall lemma. The time differences for implicit Euler is dealt with using

$$ \left(\frac{f^{n+1}-f^n}{\Delta t}\right)f^{n+1} \geq \frac{1}{\Delta t}\left(\frac{1}{2}(f^{n+1})^2-\frac{1}{2}(f^{n})^2\right). $$
We then apply the discrete Gr{\"o}nwall lemma, \cite[Proposition 3.1]{emmrichdiscrete}, from which the $\mathcal{O}(1)$ time step condition arises.
\end{proof}

Note that as $\Delta t \rightarrow 0$, the right hand side of \eqref{eqn:gronwall_discrete} approaches the right hand side of \eqref{eqn:energy_about_mean_bound}. While we only state the result and proof for implicit Euler, we note that the technique easily extends to other $G$-Stable methods (such as BDF2 in \eqref{eqn:time_discrete_bdf2}).

\section{The new iterative solver\label{sec:new_solver_formulation}}

In this section, we present a new iterative solver for \eqref{eqn:time_discrete} which improves on the previous approach in \cite{laiu2019fast} using a more tightly coupled strategy that reduces the number of required inversions of the phase space advection operator.  We also present a convergence proof for the solver under the assumption of a fixed electric field.  First, however, we quickly review the discontinuous Galerkin (DG) discretization of \eqref{eqn:time_discrete} to which the solver is applied.  Since the discretization has already been described in detail in \cite{laiu2019fast,Garrett2018AFS}, our presentation will be brief.  Generally speaking, many other discretizations of the phase space can be used.  The only substantive requirements are (i) an upwind direction is well defined for a fixed electric field and (ii) only upwind information is used to approximate derivatives.   These requirements allow for the use of the sweeping algorithm developed in \cite{Garrett2018AFS}, but can be relaxed if a different strategy is used to invert the advection operator.  In addition, the solver presented below can be applied to higher-order time discretization schemes such as diagonally implicit Runge-Kutta (RK) methods and linear multistep methods (LMMs). In some of the numerical tests, we use BDF2.

\subsection{Discontinuous Galerkin discretization\label{sec:var_form}}
 We restrict the velocity to a bounded domain $\compv \subset{V}$ and discretize the computational domain $\comp = X \times \compv$ with a Cartesian grid of open cells $K= K^x \times K^v$ of uniform size $\Delta x \times \Delta v$. Let $\mathcal T_h$ be the set of all such cells, with $h = \max\{\Delta x, \Delta v\}$; let $\mathcal F^{\rm{int}}_h$ be the set of cell edges in the interior of the domain; 
let $\mathcal F^{\rm{+/-}}_h$ be the set of cell edges in the outgoing/incoming boundary $\partial \comp_{\rm{+}/\rm{-}}$ of the computation domain; 
and let $\mathcal F_h = \mathcal F^{\rm{int}}_h \cup \mathcal F^{-}_h \cup \mathcal F^{\rm{+}}_h$. We associate a positive normal direction $n_e$ to each $e \in \mathcal{F}_h$, with the convention that  $n_e$ be the outward normal on $\mathcal F^{-}_h \cup \mathcal F^{\rm{+}}_h$. For the DG spaces, let $\cG_h^p(\comp)$ (resp. $\cG_h^p(X)$) be the set of all piecewise continuous functions on $\comp$ (resp. $X$) that are order $p$ polynomials in each cell $K$ (resp. $K^x$). Let $\cW_h^x$ be the space of globally continuous functions that are linear in each spatial cell $K^x$. We suppress the $\veps$ subscripts on $f$, $\rho$, and $\Phi$ in this section to simplify notation. While Theorem \ref{thm:convergence} below is valid for any continuous approximation of $\Phi$, we only use piecewise linear elements in our tests. The reason being that, when the approximate potential $\Phi_h$ is of order higher than linear, the electric field $E_h:=\nabla_x \Phi_h$ may change sign in a spatial cell, which complicates the sweeping procedure borrowed from \cite{Garrett2018AFS}. While an analysis of the Vlasov-Poisson equation \cite{ayuso2011discontinuous} requires a piecewise quadratic electric field for overall second-order convergence, we observe second-order convergence for a manufactured solution in Section \ref{sec:convergence_rate}. Implementing the sweeping strategy efficiently for higher order elements remains an open problem.

The sweeping algorithm developed in \cite{Garrett2018AFS} uses upwind traces on the cell edges. For $g_h \in \cG_h^1(\comp)$ and $z \in e \in \mathcal F_h$, let $g_h^{\pm}(z) = \lim_{\delta \rightarrow 0^+}g_h(z \pm \delta n_e)$, where $z = (x,v)$. Denote jumps across the interfaces by
$[g_h] = g_h^+-g_h^-$
and averages by
$\langle g_h \rangle = \frac{1}{2}(g_h^+ + g_h^-)$. The upwind trace is then defined as
\begin{equation}
\hat{g}_h(g_h^+,g_h^-,E_h) = \langle g_h\rangle - \frac{1}{2}\operatorname{sgn}(a_h \cdot n_e)[g_h],\quad\text{with}\quad a_h = (v,E_h).
\end{equation}
With these conventions, the DG discretization of \eqref{eqn:time_discrete} takes the compact form:  \textit{Find $(f_h^{n+1},\Phi_h^{n+1}) \in \cG_h^1(\comp) \times \cW_h^x$ such that, for all $(g_h,w_h) \in \cG_h^1(\comp) \times \cW_h^x$,}
\begin{subequations}
\begin{gather}
\mathcal{C}_{E_h^{n+1}}(f_h^{n+1},g_h) -  \cQ(f_h^{n+1},g_h)
= \mathcal{L}(g_h), \label{eqn:imp_euler_fully_discrete}
\\
E_h^{n+1} = \nabla_x \Phi_h^{n+1}, \quad \int_X \nabla_x \Phi_h^{n+1} \cdot \nabla_x w_h \,dx  = -\int_X(\rho_h^{n+1} - D) w_h \,dx, \label{eqn:poisson_discrete}
\end{gather}
\end{subequations}
where the operator $\mathcal{C}_{E_h}$ collects terms from the discretization of the gradient:
\begin{equation}
\mathcal{C}_{E_h}(f_h,g_h)  
= -\sum_{K \in \mathcal T_h} \int_{K} a_h f_h \cdot \nabla_z g_h  \,dvdx
+ \sum_{e \in  \mathcal F_h^{\rm{int}} \cup \mathcal F_h^{\rm{+}} } \int_{e}a_h \hat{f}_h [g_h] \cdot n_e  \,ds(x,v),
\end{equation}
the operator $Q$ collects terms from the collision operator plus the implicit term in the temporal discretization:
\begin{equation}\label{eqn:operatorQ}
\cQ(f_h,g_h) 
= \underbrace{\sum_{K \in \mathcal T_h}\int_{K} \frac{\omega}{\veps} M(v)\rho_hg_h \,dvdx}_{\mathcal{S}(\rho_h,g_h) }
- \underbrace{\sum_{K \in \mathcal T_h}\int_{K}\left( \frac{\veps}{\Delta t}+ \frac{\omega}{\veps} \right) f_hg_h \,dvdx}_{\mathcal{R}(f_h,g_h)},
\end{equation} 
and the operator $\mathcal{L}$ combines the volumetric source, incoming boundary conditions, and the explicit term in the temporal discretization:
\begin{equation}
\mathcal{L}(g_h) 
\label{eq:L}
=  \sum_{K \in \mathcal T_h}\int_{K} \veps\left(q^{n+1} + \Delta t^{-1} f^n_h\right) g_h\,dvdx 
+ \sum_{e \in  \mathcal F_h^{\rm{-}}} \int_{e} a_h f_{-} g_h^- \cdot n_e  \,ds(x,v). 
\end{equation} 

\subsection{Formulation as a fixed point problem\label{sec:formulation_of_fixed_point}}
To lighten the notation, we remove the superscript $n+1$, setting  $f_h := f_h^{n+1}$, $\rho_h := \rho_h^{n+1}$, and $E_h := E_h^{n+1}$.  We then reformulate \eqref{eqn:imp_euler_fully_discrete} to isolate $\rho_h$.  There are two reasons for this:  first, the calculation of $\rho_h$ creates global coupling in velocity and second, expressing \eqref{eqn:imp_euler_fully_discrete} in terms of $\rho_h$ allows for a significant reduction in memory costs for Krylov subspace methods.  Let $\psolve: \rho_h \mapsto E_h$ denote the solution map from the density to the electric field defined by Poisson system in \eqref{eqn:poisson_discrete} so that $E_h = \psolve (\rho_h)$; and let $\intv:\cG_h^1(\comp) \rightarrow \cG_h^1(X)$ denote the velocity integral over $\compv$, so that \eqref{eqn:poisson_discrete} can be denoted as $E_h = \psolve(\intv(f_h))$. Then \eqref{eqn:imp_euler_fully_discrete} can be written as 
\begin{equation}\label{eqn:var_form_compact}
\cA_{\psolve(\intv(f_h))}(f_h,g_h) = \mathcal{L}(g_h)+\cS(\intv(f_h),g_h),
\end{equation}
where $\mathcal{A}_{E_h} = \cC_{E_h} + \cR $ and the forms $\cS$ and $\cR$ are defined in \eqref{eqn:operatorQ}.

The main computational kernel for solving \eqref{eqn:var_form_compact} is the sweeping algorithm developed in \cite{Garrett2018AFS}, which for a fixed value of $E_h$, inverts (the linear operator associated to) $\mathcal{A}_{E_h}$.  The sweeping algorithm builds upon well-known methods  for neutral particle transport (see for example \cite{larsen2010advances,Adams-Larsen-2002}), which update information following characteristics in phase space.  Unlike the neutral particle case, the characteristics of the advection operator in \eqref{eqn:model} may be cyclic.  The main contribution of \cite{Garrett2018AFS} was to introduce a domain decomposition of the phase space into $2^d$ subdomains upon which the sign of each component of $v$ is constant.  This decomposition implicitly assumes that each cell $K^v$ is a subset of one and only one such domain.  To simplify the presentation, we assume $d=1$.  The extension to $d>1$ is straight-forward.  

As in \cite{Garrett2018AFS}, let the two subdomains be $Z^1 = \{ (x,v) \subset \comp \colon v >0\}$ and $Z^2 = \{ (x,v) \subset \comp \colon v <0\}$, and let $B = \partial Z^1 \cap \partial Z^2 = \{ (x,v) \subset \comp \colon v =0\}$.  Then for each $g\in\cG_h^1(\comp)$, define
\begin{equation} \label{eqn:velocity_decomp}
\gone = \bigchi _{\{v>0\}}g 
\quad \text{and} \quad 
\gtwo = \bigchi _{\{v<0\}}g.
\end{equation}
Then 
\begin{equation}
\cA_{E_h}(f_h,g_h) = \cA_{E_h}(\fone_h, \gone_h) + \cA_{E_h}(\ftwo_h, \gtwo_h) + \cA_{E_h}(\fone_h, \gtwo_h) + \cA_{E_h}(\ftwo_h, \gone_h),
\end{equation}
and  the only coupling between the two subdomains occurs at the boundary $B$. To isolate the unknowns there, let  $\fonetilde_h$ and $\ftwotilde_h$ be the numerical trace values of $\fone$ and $\ftwo$, respectively, on $B$:
\begin{equation}	\label{eq:subdomain_traces}
	\fonetilde_h (x) := \bigchi_{\{E_h < 0\}}(x) \hat{f}_h(x,0),\quad\text{and}\quad
	\ftwotilde_h (x) := \bigchi_{\{E_h > 0\}}(x) \hat{f}_h(x,0),
\end{equation}
and define
\begin{subequations}
	\label{eq:subdomain_coupling}
	\begin{align}	\label{eq:subdomain_coupling_B1}
	\mathcal{B}^{\one} (\fonetilde_h,\gtwo_h) 
		&= \mathcal{A}_{E_h}(\fone_h,\gtwo_h) = -  \int_{\{x:E_h < 0\}}|E_h|\hat{f}_h(x,0) g_h^-(x,0) dx \:,
\\
	\mathcal{B}^{\two} (\ftwotilde_h,\gone_h) 
		&= \mathcal{A}_{E_h}(\ftwo_h,\gone_h) = -  \int_{\{x:E_h > 0\}}|E_h|\hat{f}_h(x,0) g_h^+(x,0) dx \:.
			\label{eq:subdomain_coupling_B2} 
	\end{align}
\end{subequations}
We write the numerical trace values on $B$ in a compact operator form as $\widetilde{f}_h = \trace_{E_h} f_h$, where the operator $\trace_E$ is defined as $\trace_E f := \trace^{\one}_E \fone + \trace^{\two}_E \ftwo$ with
\begin{subequations}	\label{eq:subdomain_traces_cont}
	\begin{align}
	&\trace^{\one}_{E}(\fone)(x) := \bigchi_{\{E < 0\}}(x) \lim_{\delta \rightarrow 0^+}\fone(x,\delta),
	\\
	&\trace^{\two}_{E}(\ftwo)(x) := \bigchi_{\{E > 0\}}(x) \lim_{\delta \rightarrow 0^-}\ftwo(x,\delta).
	\end{align}
\end{subequations}
From this definition, we see that $\trace_{\edummy_h}$ maps $\cG_h^1(\comp)$ into $\cG_h^1(X)$ for any $\edummy_h \in \cG_h^0(X)$.  

The algorithm in \cite{Garrett2018AFS} uses \eqref{eq:subdomain_traces} and \eqref{eq:subdomain_coupling} to isolate and remove the coupling between $\fone_h$ and $\ftwo_h$ from the left-hand side of \eqref{eqn:var_form_compact}.   
Specifically, from \eqref{eq:subdomain_traces} and \eqref{eq:subdomain_coupling}, solving for $f_h$ in \eqref{eqn:var_form_compact} can be viewed as finding $u_h \in \cG_h^1(\comp)$ that solves 
\begin{subequations}\label{eqn:var_form_decoupled}
\begin{gather}
\mathcal{A}_{\edummy_h}(\uone_h,\gone_h) = \mathcal{L}(\gone_h)+\mathcal{S}(\sigma_h,\gone_h) - \mathcal{B}^{\two} (\rtwo_h,\gone_h),
\label{eqn:var_form_decoupled_1}\\
\mathcal{A}_{\edummy_h}(\utwo_h,\gtwo_h) = \mathcal{L}(\gtwo_h)+\mathcal{S}(\sigma_h,\gtwo_h) -\mathcal{B}^{\one}  (\rone_h,\gtwo_h),
\label{eqn:var_form_decoupled_2}\\
\text{with}\quad \sigma_h = \intv(u_h), \quad r_h = \trace_{\edummy_h}(u_h), \quad\text{and}\quad \edummy_h = F(\sigma_h).
\label{eqn:var_form_decoupled_trace}
\end{gather}
\end{subequations} 
Let $\mmap:\cG_h^0(X)\times \cG_h^1(X)\times \cG_h^1(X) \rightarrow \cG_h^1(\comp)$ denote the operator that maps $(\edummy_h,\sigma_h,r_h)$ to $u_h$ in \eqref{eqn:var_form_decoupled_1}--\eqref{eqn:var_form_decoupled_2}. That is, $u_h$ is computed by $u_h = \mmap(\edummy_h,\sigma_h,r_h)$, where $(\edummy_h,\sigma_h,r_h)$ solves the coupled, lower dimensional system
\begin{equation}
\sigma_h = \intv(\mmap(\edummy_h,\sigma_h,r_h)),\quad r_h = \trace_{\edummy_h}(\mmap(\edummy_h,\sigma_h,r_h)), \quad \edummy_h = \psolve (\sigma_h).
\end{equation}
The evaluation of $\mmap(\edummy_h,\sigma_h,r_h)$ allows for independent inversion of the linear operator associated to $\mathcal A_{\edummy_h}$ in each subdomain, which is referred to as a \textit{sweep}. We note that due to the trace definitions and the domain decomposition, a sweep does not require assembling the matrix associated with $\mathcal{A}_{\edummy_h}$; it only requires inverting a sequence of $3\times 3$ matrices associated to the variational formulation on each cell $K$ (a linear element has three degrees of freedom in the one space-one velocity dimension case).
In view of \eqref{eqn:var_form_compact}, the solution satisfies $f_h = \map(\rho_h, \widetilde{f}_h):=\mmap(\psolve(\rho_h),\rho_h, \widetilde{f}_h)$ where $(\rho_h, \widetilde{f}_h)$ solves
\begin{equation}\label{eqn:fixed_point_problem}
\begin{pmatrix} 
\rho_h  \\[5pt]
\widetilde{f}_h
\end{pmatrix}
=
\begin{pmatrix} 
\intv (\map(\rho_h, \widetilde{f}_h))   \\ 
\trace_{\psolve(\rho_h)} (\map(\rho_h, \widetilde{f}_h))
\end{pmatrix}.
\end{equation}
This fixed point problem \eqref{eqn:fixed_point_problem} is the basis for the method used in this paper. 
Invariably, the most expensive part of solving \eqref{eqn:fixed_point_problem} is the evaluation of $N(\sigma_h,r_h)$ for a given $(\sigma_h,r_h)$, which requires one sweep in each subdomain.

In \cite{laiu2019fast}, the system \eqref{eqn:fixed_point_problem} was solved with a nested iterative approach.  With Picard iteration, this approach results in the algorithm
\begin{gather}\label{eqn:nested iteration}
\rho_h^{k+1}  
	= \intv \left(\map(\rho_h^{k},\widetilde{f}_h^{k+1})\right)  \:,\\
\widetilde{f}_h^{k+1} = \lim_{\ell \to \infty}\widetilde{f}_h^{k+1,\ell},
\quad
\widetilde{f}_h^{k+1,\ell+1} 
	= \trace_{\psolve(\rho^k_h)} \left(\map (\rho_h^{k}, \widetilde{f}_h^{k+1,\ell})\right) .
\end{gather}
A more explicit summary of this algorithm, which we refer to as nested (NEST), is given in Algorithm \ref{alg:nest}.  

In the current work, we investigate a more tightly coupled strategy that does not involve nested iterations.
In this case, a Picard iteration for \eqref{eqn:fixed_point_problem} takes the form
\begin{equation}\label{eqn:fixed_point_iteration}
\begin{pmatrix} 
  \rho_h^{k+1}  \\[5pt]
  \widetilde{f}_h^{k+1}
\end{pmatrix}
=
\begin{pmatrix}
   \intv (\map(\rho_h^{k}, \widetilde{f}_h^k))     \\ 
   \trace_{\psolve(\rho^k_h)} (\map(\rho_h^{k}, \widetilde{f}_h^k))  
\end{pmatrix}
.
\end{equation}
A more explicit summary of this algorithm, which we refer to as nonlinear sweeping (NLS), is given in Algorithm \ref{alg:nls}.   Its main benefit is that it requires only one evaluation of $\map$ (one sweep in each subdomain) per iteration, which results in far fewer sweeps in the solution procedure.

\begin{algorithm}\caption{Nested algorithm (NEST)}\label{alg:nest} 

\vspace{1mm}

\noindent
\textbf{Outer loop}:  Given $\rho^k_h$, solve for $\rho_h^{k+1}$:
 \begin{gather}\label{eqn:nested_source}
\mathcal{A}_{F(\rho_h^k)}(f_h^{k+1},g_h) = \mathcal{L}(g_h)+\mathcal{S}(\rho_h^k,g_h),\\
 \rho_h^{k+1} = \intv(f_h^{k+1}),
\end{gather} 
where \eqref{eqn:nested_source} is solved with the inner loop.  

\noindent \vspace{1mm}

\textbf{Inner loop}:  Given $\rho^k_h$ and $\widetilde{f}_h^{\ell}$, solve for $\widetilde{f}_h^{\ell+1}$:
\begin{subequations}\label{eqn:nested_sweep}
\begin{gather}
\mathcal{A}_{F(\rho_h^k)}(\fone[\ell+1]_h,\gone_h) = \mathcal{L}(\gone_h)+\mathcal{S}(\rho_h^k,\gone_h) - \mathcal{B}^{\two} (\ftwotilde[\ell]_h,\gone_h),
\label{eqn:nested_sweep_1}\\
\mathcal{A}_{F(\rho_h^k)}(\ftwo[\ell+1]_h,\gtwo_h) = \mathcal{L}(\gtwo_h)+\mathcal{S}(\rho_h^{k},\gtwo_h) -\mathcal{B}^{\one} (\fonetilde[\ell]_h,\gtwo_h),
\label{eqn:nested_sweep_2}\\
\fonetilde[\ell+1]_h = \trace_{F(\rho_h^k)}^{\one}(\fone[\ell+1]_h) , \hspace{10mm} \ftwotilde[\ell+1]_h = \trace_{F(\rho_h^k)}^{\two}(\ftwo[\ell+1]_h).
\label{eqn:nested_sweep_trace}
\end{gather} 
\end{subequations}
When $\widetilde{f}_h^{\ell}$ is sufficiently converged, solve \eqref{eqn:nested_sweep_1}--\eqref{eqn:nested_sweep_2} one more time for $(\fone[\ell + 1]_h,\,\ftwo[\ell + 1]_h)$ and set $f_h^{k+1} = \fone[\ell + 1]_h + \ftwo[\ell + 1]_h$.
\end{algorithm}

\begin{algorithm}\caption{Nonlinear sweeping method (NLS)}\label{alg:nls}
Given $(\rho_h^k, \widetilde{f}_h^k)$, solve
\begin{subequations} \label{eqn:nls-explicit}
\begin{gather}
\mathcal{A}_{F(\rho_h^k)}(\fone[k+1]_h,\gone_h) = \mathcal{L}(\gone_h)+\mathcal{S}(\rho_h^k,\gone_h) - \mathcal{B}^{\two} (\ftwotilde[k]_h,\gone_h),
\label{eqn:nls-explicit_1} \\
\mathcal{A}_{F(\rho_h^k)}(\ftwo[k+1]_h,\gtwo_h) = \mathcal{L}(\gtwo_h)+\mathcal{S}(\rho_h^{k},\gtwo_h) -\mathcal{B}^{\one} (\fonetilde[k]_h ,\gtwo_h),
\label{eqn:nls-explicit_2} \\
\rho_h^{k+1} = \intv(f_h^{k+1}), \,\,\,\,\, \fonetilde[k+1]_h = \trace^{\one}_{F(\rho_h^k)}(\fone[k+1]_h) , \,\,\,\,\,\ftwotilde[k+1]_h = \trace^{\two}_{F(\rho_h^k)}(\ftwo[k+1]_h).
\label{eqn:nls-explicit_trace} 
\end{gather} 
\end{subequations}
\end{algorithm}
\subsection{Convergence of the fixed point map}
We now prove the main result of this paper, which is the convergence of Algorithm \ref{alg:nls}. 
For the sake of simplicity, we assume that the incoming boundary conditions are zero, in which case the second term in \eqref{eq:L} can be removed. We also restrict ourselves to the linear case where the electric potential $\Phi_h\in \cW_h^x$ is fixed. The electric field, $E_h = \nabla \Phi_h$, is also fixed as a result. However, we update $\Phi_h$ every iteration in our experiments so that $\Phi$ is self consistently coupled to the Boltzmann equation. Convergence of the nonlinear case remains open.

As in the energy analysis, we need to work with weighted $L^2$ spaces to avoid restrictive conditions arising from $\varepsilon$. The phase space discretization complicates the analysis since we wish to test with functions outside the span of the polynomial basis. Therefore, we test with a projection of the desired test function. Let $\Pi_h$ denote the projection from $L^2(\comp) \rightarrow \mathcal{G}_h^1(\comp)$. 

We prove in Theorem \ref{thm:convergence} that the fixed point method converges to the order of the consistency error arising from using the projection into the DG space. For the remainder of this section, let $\kappa$ be a constant such that $0 < \kappa < 1$. The idea is that if $b$ is small, then $a^{n}$ is almost a contractive sequence if
\begin{equation}\eqnnum
a^{n+1} \leq \kappa a^n + b,
\end{equation}
which implies $a^{n+1} \leq \kappa^n a_0 + \mathcal{O}(b)$. In the current context, $b$ represents the following consistency error.
\begin{definition}
Let $h = \max\{\Delta x, \Delta v\}$, $u_h \in \cG_h^1(\comp)$, $\sigma_h$ and $r_h \in \cG_h^1(X)$. The projection error $\phi_h$ is defined
\begin{equation}
\phi_h = \phi_h^{(1)} + \phi_h^{(2)}, \quad \phi^{(i)}_h = \Pi_h\left(\frac{u^{(i)}_h}{M}e^{-\Phi_h/\Theta}\right) - \frac{u_h^{(i)}}{M}e^{-\Phi_h/\Theta}, \quad i=1,2.
\end{equation}
The consistency error operator is
\begin{equation}\label{eqn:tau_clean}
\begin{alignedat}{2}
\tau_h(u_h, \sigma_h, r_h) = &\mathcal{A}_{E_h} (\uone_h,\phi^{\one}_h ) + \mathcal{A}_{E_h} (\utwo_h,\phi^{\two}_h ) - \mathcal{S}\left( \sigma_h , \phi_h \right) \\
&+\mathcal{B}^{\two} ( \rtwo_h,\phi_h^{\one}) +\mathcal{B}^{\one} ( \rone_h,\phi_h^{\two}). 
\end{alignedat}
\end{equation}
\end{definition}
Put differently, the consistency error is the summed residual of \eqref{eqn:nls-explicit_1} and \eqref{eqn:nls-explicit_2} with $\mathcal{L} = 0$, and $g_h = \phi_h$. We note that it is possible to use the unweighted $L^2(\comp)$ space without a consistency error term, but it requires an unacceptable $\Delta t < C \varepsilon^2$ condition, so we do not show this analysis herein. It is also possible to use analysis similar to the stability proof in Section \ref{sec:continuous_stability} to obtain convergence in the $L^2(\comp)$ norm weighted by $M^{-1}$ under a $\Delta t < C$ condition with a consistency term depending only on $\Delta v$ instead of both $\Delta v$ and $\Delta x$. For brevity of the manuscript, we use the Hamiltonian of the PDE with both $x$ and $v$ dependent weights.

\begin{definition}[Norms and contraction constants for NLS\label{def:norm_contract}]
Define $\normweight(x) = \frac{2\varepsilon^2}{\Delta t} + \omega(x) - 2\varepsilon^{5/2}$ and $M_0=M(0)$. Assume $\omega \in L^{\infty}(X)$, and let $C$ be the constant for the inverse inequality $C \Delta v\|\trace_{E_h}u_h\|^2 \leq \|\partial_v u_h\|^2$. For all $\varepsilon>0$, we define the norm and the estimate for the contraction constant
\begin{subequations}\label{eqn:norm_def_1}
\begin{gather}
\|(\sigma,r)\|_{_{\emph{NLS}_1}}^2 :=\frac{1}{2}\int_X \left\lbrace
   \left(\frac{\varepsilon^2}{\Delta t} + \omega \right)\sigma^2  
   +
   \veps\left(\frac{C\varepsilon\Delta v}{\Delta t} + |E_h| \right)
    \frac{r^2}{M_0} \right\rbrace e^{-\frac{\Phi_h}{\Theta}} dx,
    \\
\kappa_{_{\emph{NLS}_1}} := \left( \max \left( \frac{\omega_{\max}}{\veps^2\Delta t^{-1} + \omega_{\max}},\frac{\|E_h\|_{L^\infty}}{C\Delta v\veps\Delta t^{-1} + \|E_h\|_{L^\infty}} \right) \right)^{1/2}.\label{eqn:norm_def_1_const}
\end{gather}
\end{subequations}
If $\min_x \normweight >0$, we define the norm and the estimate for the contraction constant
\begin{subequations}\label{eqn:norm_def_2}
\begin{gather}
\|(\sigma,r)\|_{_{\emph{NLS}_2}}^2 :=\frac{1}{2}\int_X \left\lbrace
   \normweight(x)\sigma^2  
   +
   \veps\left(2 C\varepsilon^{3/2}\Delta v + |E_h| \right)
    \frac{r^2}{M_0} \right\rbrace e^{-\frac{\Phi_h}{\Theta}} dx,
    \\
\kappa_{_{\emph{NLS}_2}} :=\left( \max \left( \frac{\omega_{\max}}{2\varepsilon^2\Delta t^{-1} + \omega_{\max}   - 2\varepsilon^{5/2}}\,,\,\,\,\,\frac{\|E_h\|_{L^\infty}}{C\Delta v \varepsilon^{3/2} + \|E_h\|_{L^\infty}} \right) \right)^{1/2}.\label{eqn:norm_def_2_const}
\end{gather}
\end{subequations}

\end{definition}

\begin{theorem}[Convergence for a given electric field]\label{thm:convergence}
Suppose that $\omega \in L^{\infty}(X)$ and consider the problem \eqref{eqn:var_form_decoupled} with $\mathcal{L} = 0$ and an electric field $E_h$ derived from a given continuous potential: $E_h = \nabla_x \Phi_h$. 
With the operators defined in Section \ref{sec:formulation_of_fixed_point}, define the map $\contractmap:\cG_h^1(X) \times \cG_h^1(X) \rightarrow \cG_h^1(X) \times \cG_h^1(X)$ by
\begin{equation}\label{eqn:NLS_map}
\contractmap(\sigma_h, r_h) := (\intv \mmap(E_h,\sigma_h, r_h),\trace_{E_h} \mmap(E_h,\sigma_h, r_h)).
\end{equation}
Then $R$ satisfies
\begin{equation}\eqnnum
\|\contractmap(\sigma_h,r_h)\|_{_{\emph{NLS}_j}} \leq \kappa_{_{\emph{NLS}_j}} \|(\sigma_h,r_h)\|_{_{\emph{NLS}_j}} + \left(\veps\tau_h(\mmap(E_h,\sigma_h, r_h), \sigma_h, r_h)\right)^{1/2}
\end{equation}
with $j=1$ for any $\varepsilon>0$, and with $j=2$ for $\varepsilon$ sufficiently small.
\end{theorem}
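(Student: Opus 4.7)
The approach is a discrete, linear analog of the continuous weighted $L^2$ argument used in Section~\ref{sec:continuous_stability}, adapted to the DG setting through the projection $\Pi_h$. Set $u_h = \mmap(E_h,\sigma_h,r_h)$ and $(\sigma_h',r_h') := \contractmap(\sigma_h,r_h)$, so that $\sigma_h' = \intv(u_h)$ and $r_h' = \trace_{E_h} u_h$. Starting from \eqref{eqn:var_form_decoupled_1}--\eqref{eqn:var_form_decoupled_2} with $\mathcal{L}=0$, I would test with $g_h^{(i)} = \Pi_h(u_h^{(i)} e^{-\Phi_h/\Theta}/M)$ and sum over $i$. Splitting each test function as $u_h^{(i)} e^{-\Phi_h/\Theta}/M + \phi_h^{(i)}$, the $\phi_h^{(i)}$ contributions rearrange by \eqref{eqn:tau_clean} into exactly $-\tau_h(u_h,\sigma_h,r_h)$ on the right-hand side. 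The goal is to produce the squared contraction
\begin{equation*}
\|\contractmap(\sigma_h,r_h)\|_{_{\text{NLS}_j}}^2 \le \kappa_{_{\text{NLS}_j}}^{2}\,\|(\sigma_h,r_h)\|_{_{\text{NLS}_j}}^{2} + \veps\,\tau_h(u_h,\sigma_h,r_h),
\end{equation*}
after which subadditivity of the square root gives the stated inequality.

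\textbf{Extracting the left-hand norms.} With the ideal weight $\psi_h^{(i)}:= u_h^{(i)} e^{-\Phi_h/\Theta}/M$, the reaction form contributes $\mathcal R(u_h,\psi_h) = \int_{\comp}(\veps/\Delta t + \omega/\veps)\,u_h^2\,e^{-\Phi_h/\Theta}/M\,dv\,dx$, which by Cauchy--Schwarz in $v$ (using $\int_V M\,dv = 1$) dominates $\int_X(\veps/\Delta t + \omega/\veps)(\sigma_h')^2 e^{-\Phi_h/\Theta}\,dx$. For the convective form $\mathcal C_{E_h}(u_h,\psi_h)$, the weight has been chosen precisely so that the volume terms cancel: $v\cdot\nabla_x(e^{-\Phi_h/\Theta}) = -v\cdot E_h\,e^{-\Phi_h/\Theta}/\Theta$ annihilates $E_h\cdot\nabla_v(M^{-1}) = v\cdot E_h\,M^{-1}/\Theta$. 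This cancellation requires $E_h$ to be piecewise constant on each phase-space cell, which is why the assumption of piecewise linear $\Phi_h$ is invoked. The surviving DG upwind face terms yield nonnegative interior-face jump dissipation (which I would discard) and a contribution on the subdomain interface $B = \{v=0\}$; using \eqref{eq:subdomain_traces_cont}, the identity $M(0)=M_0$, and the upwind flux structure, this $B$-contribution evaluates to $\tfrac{1}{2}\int_X|E_h|(r_h')^2\,e^{-\Phi_h/\Theta}/M_0\,dx$. Scaling the whole identity by $\veps$ then reproduces the $(\veps^2/\Delta t + \omega)\sigma^2$ and $\veps |E_h| r^2/M_0$ contributions appearing in $\|\cdot\|_{_{\text{NLS}_j}}^2$.

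\textbf{Absorbing the right-hand cross terms.} The source form contracts in $v$ to $\mathcal S(\sigma_h,\psi_h) = \int_X(\omega/\veps)\sigma_h\sigma_h' e^{-\Phi_h/\Theta}\,dx$, while the coupling forms reduce to boundary integrals of $|E_h| r_h^{(i)}(r_h')^{(j)}/M_0\,e^{-\Phi_h/\Theta}$ on $B$ with $i\neq j$. Each mixed quadratic is split via Young's inequality $ab\le\tfrac{\alpha}{2}a^2 + \tfrac{1}{2\alpha}b^2$, absorbing the $(\sigma_h')^2$ and $(r_h')^2$ halves into the $\mathcal R$ and $B$-interface contributions on the left, and leaving the $\sigma_h^2$ and $r_h^2$ halves on the right. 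Choosing $\alpha$ optimally produces exactly the ratios in \eqref{eqn:norm_def_1_const}, valid for all $\veps>0$. For NLS$_2$, I would adjust the split so that the $\omega/\veps$ source term is more aggressively absorbed, at the cost of introducing a $\veps^{5/2}$-order deficit; this is precisely the content of $\normweight$ and requires $\min_x \normweight > 0$, hence the small-$\veps$ hypothesis. Converting the $r_h$-coupling term into a bound by $(r_h')^2$ uses the inverse inequality $C\Delta v\,\|\trace_{E_h} u_h\|^2\le\|\partial_v u_h\|^2$ from Definition~\ref{def:norm_contract}, with the velocity-gradient dissipation recovered from the pre-cancellation step in $\mathcal C_{E_h}$.

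\textbf{Main obstacle.} The critical and most delicate step is the surface accounting in $\mathcal C_{E_h}$: separating the discarded nonnegative interior-face dissipation from the $B$-interface quadratic, and identifying the latter with $|E_h|(r_h')^2/M_0$ via \eqref{eq:subdomain_traces_cont}. This matters because that quadratic is what makes $(r_h')^2$, and hence the full $\|\contractmap(\sigma_h,r_h)\|^2_{_{\text{NLS}_j}}$, appear on the left --- without it one obtains only a $(\sigma_h')^2$ bound and the estimate is vacuous in the $r_h$ variable. Once the interface identification is secured, recovering the specific constants \eqref{eqn:norm_def_1_const}--\eqref{eqn:norm_def_2_const} is essentially Young's bookkeeping.
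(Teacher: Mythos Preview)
Your overall architecture is right and matches the paper: test \eqref{eqn:var_form_decoupled_1}--\eqref{eqn:var_form_decoupled_2} with $\Pi_h(u_h^{(i)}e^{-\Phi_h/\Theta}/M)$, peel off the projection error as $\tau_h$, invoke the Hamiltonian identity \eqref{eqn:hamiltonian_vlasov_identity_clean} so that only face terms survive from $\mathcal{C}_{E_h}$, and use Young's inequality on the $\mathcal{S}$ and $\mathcal{B}$ cross terms. Where your proposal goes wrong is the mechanism that produces the $C\veps\Delta v/\Delta t$ piece of the trace weight in $\|\cdot\|_{_{\text{NLS}_j}}$.

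There is no ``velocity-gradient dissipation recovered from the pre-cancellation step in $\mathcal{C}_{E_h}$.'' Once \eqref{eqn:hamiltonian_vlasov_identity_clean} is applied, the volume contribution of $\mathcal{C}_{E_h}$ is a pure divergence; nothing of the form $\int(\partial_v u_h)^2$ remains, and the interior-face jumps you discard do not help either. What $\mathcal{C}_{E_h}$ actually delivers on $B$ is $\tfrac12\int_X|E_h|\bigl(u_h^+(x,0)^2+u_h^-(x,0)^2\bigr)e^{-\Phi_h/\Theta}/M_0\,dx$, i.e.\ \emph{both} one-sided traces, not just $(r_h')^2$. Correspondingly, the coupling $\mathcal{B}^{(2)}(r_h^{(2)},\psi_h^{(1)})$ pairs $r_h^{(2)}$ with the \emph{downwind} trace $u_h^+(x,0)$ on $\{E_h>0\}$, not with $(r_h')^{(1)}$, which vanishes there. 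Young's inequality on $\mathcal{B}$ then cancels the downwind-squared piece from $\mathcal{C}_{E_h}$, leaving $\tfrac12\int_X|E_h|\,\hat{u}_h(x,0)^2 e^{-\Phi_h/\Theta}/M_0\,dx=\tfrac12\int_X|E_h|(r_h')^2e^{-\Phi_h/\Theta}/M_0\,dx$ on the left versus $\tfrac12\int_X|E_h|r_h^2e^{-\Phi_h/\Theta}/M_0\,dx$ on the right --- the same weight on both sides, hence no contraction yet in the $r$-variable.

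The missing $C\veps\Delta v/\Delta t$ comes instead from the reaction term $\mathcal{R}$. You should \emph{not} spend all of $\int_{\comp}(\veps/\Delta t+\omega/\veps)u_h^2 e^{-\Phi_h/\Theta}/M$ on the Cauchy--Schwarz bound $(\sigma_h')^2\le\int u_h^2/M\,dv$. The paper splits off half of the $\veps/\Delta t$ portion and applies the inverse trace inequality from Definition~\ref{def:norm_contract} directly to $\int u_h^2/M\,dv$ (not to a gradient) to extract $\tfrac{C\Delta v\,\veps}{2\Delta t}\int_X (r_h')^2 e^{-\Phi_h/\Theta}/M_0\,dx$; see \eqref{eqn:splitting_volume_term}. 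Only after this split does the left-hand $r'$-weight become $C\veps\Delta v/\Delta t+|E_h|$, and then the elementary bound $|E_h|\le\frac{\|E_h\|_\infty}{C\veps\Delta v/\Delta t+\|E_h\|_\infty}(C\veps\Delta v/\Delta t+|E_h|)$ yields the second ratio in \eqref{eqn:norm_def_1_const}. The NLS$_2$ case is the same idea with the alternative split $\veps/\Delta t+\omega/(2\veps)=(\veps/\Delta t+\omega/(2\veps)-\veps^{3/2})+\veps^{3/2}$, which is where $\normweight$ enters and why positivity of $\normweight$ is required.
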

\begin{remark}
We include two different norm and contraction constant pairs because while $(\|\cdot\|_{_{\emph{NLS}_1}},\kappa_{_{\emph{NLS}_1}})$ is valid for all $\varepsilon > 0$, the second pair $(\|\cdot\|_{_{\emph{NLS}_2}},\kappa_{_{\emph{NLS}_2}})$ provides a sharper estimate when $\varepsilon$ is sufficiently small. One sufficient condition for the second pair to be valid is that $\veps < \max(\Delta t^{-2}, \omega_{\min}^{2/5})$. For comparison with the NEST method, we note that a similar and simpler proof (not shown) shows that the outer iteration of the NEST (Algorithm \ref{alg:nest}) converges with the norm and contraction constant pair
\begin{equation}\label{eqn:contraction_constant_prediction}
\|\sigma_h\|_{_\emph{NEST}}^2 :=  \frac{1}{2}\int_X 
   \left( \frac{2\varepsilon^2}{\Delta t} + \omega \right)\sigma^2 dx , \quad \kappa_{_\emph{NEST}} = \bigg( \frac{\omega_{\max}}{2\varepsilon^2\Delta t^{-1}+ \omega_{\max}} \bigg)^{1/2}.
\end{equation}
With everything fixed except for $\veps$, we see that

\begin{equation}
\|(\sigma_h,r_h)\|_{_{\emph{NLS}_2}} \sim \|\sigma_h\|_{_{\emph{NEST}}}, \quad \kappa_{_{\emph{NLS}_2}} \sim \kappa_{_{\emph{NEST}}} \quad \text{as } \varepsilon \rightarrow 0.
\end{equation}
This suggests that NLS should converge in around the same number of outer iterations as NEST for $\varepsilon$ small. Since each outer iteration of NEST requires \emph{at least} one sweep, but an iteration of NLS requires only one sweep, we predict NLS should converge faster in terms of wall time when $\varepsilon$ is small.
We verify this behavior and the estimate of $\kappa_{_{\emph{NLS}_2}}$ numerically in Section \ref{sec:numerical_tests}.
\end{remark}

We require the following Hamiltonian identity to prove Theorem \ref{thm:convergence}.
\begin{lemma}Given the relation $E = \nabla_x \Phi$, we have for any $f$ that
\begin{equation}\label{eqn:hamiltonian_vlasov_identity_clean}
\begin{alignedat}{2}
&f\left(v \cdot \nabla_x \left( \frac{f}{M}e^{-\Phi/\Theta}\right) + E \cdot \nabla_v \left( \frac{f}{M}e^{-\Phi/\Theta}\right) \right) \\
&\qquad\qquad= \frac{1}{2} \left(v \cdot \nabla_x \left( \frac{f^2}{M} e^{-\Phi/\Theta}\right)  + E \cdot \nabla_v \left( \frac{f^2}{M} e^{-\Phi/\Theta}\right) \right).
\end{alignedat}
\end{equation}
\end{lemma}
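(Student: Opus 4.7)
The plan is to recognize the weight $e^{-\Phi/\Theta}/M$ as the exponential of the Hamiltonian of the underlying particle dynamics and to exploit the fact that the phase-space transport operator annihilates any function of the Hamiltonian. Concretely, I introduce the shorthand $A := v\cdot\nabla_x + E\cdot\nabla_v$ for the advection operator and $w := e^{-\Phi/\Theta}/M$ for the weight. Since $M = (2\pi\Theta)^{-d/2}e^{-v^2/(2\Theta)}$, one computes
\begin{equation}
w(x,v) = (2\pi\Theta)^{d/2}\exp\left(\frac{v^2/2 - \Phi(x)}{\Theta}\right),
\end{equation}
so $w$ depends on $(x,v)$ only through the Hamiltonian $H(x,v) = v^2/2 - \Phi(x)$.

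The first step is the elementary verification that $A(H) = 0$: by the chain rule and $E = \nabla_x\Phi$,
\begin{equation}
A(H) = v\cdot\nabla_x H + E\cdot\nabla_v H = -v\cdot\nabla_x\Phi + E\cdot v = 0.
\end{equation}
A second application of the chain rule then gives $A(w) = (w/\Theta)\,A(H) = 0$; that is, the weight is conserved along characteristics of the Vlasov flow.

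Once $A(w)=0$ is in hand, the identity collapses to a two-line product-rule calculation. The left-hand side of the claim equals $f\,A(fw)$, and by the Leibniz rule
\begin{equation}
A(fw) = w\,A(f) + f\,A(w) = w\,A(f),
\end{equation}
so the left-hand side equals $fw\,A(f)$. For the right-hand side, another application of Leibniz gives $\tfrac{1}{2} A(f^2 w) = \tfrac{1}{2} w\,A(f^2) + \tfrac{1}{2} f^2 A(w) = \tfrac{1}{2} w \cdot 2f\,A(f) = fw\,A(f)$, matching the left-hand side.

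There is essentially no obstacle; the whole content of the lemma is the observation that the integrating factor used later in the proof of Theorem~\ref{thm:convergence} is a function of the conserved Hamiltonian, hence orthogonal to the Vlasov flow. The only mild subtlety worth flagging is the sign convention $E = \nabla_x\Phi$ adopted in this paper (opposite to the usual physics sign), since the cancellation $-v\cdot\nabla_x\Phi + E\cdot v = 0$ depends on exactly this choice.
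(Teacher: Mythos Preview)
Your proof is correct. The paper does not actually prove this lemma; it merely states it as a ``Hamiltonian identity'' and immediately proceeds to the proof of Theorem~\ref{thm:convergence}, so your argument supplies what the paper leaves implicit.
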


\begin{proof}[Proof of Theorem \ref{thm:convergence}] Define $u_h := \mmap(E_h,\sigma_h, r_h)$, and $(\upsilon_h, s_h) := \contractmap(\sigma_h, r_h)$.
 In \eqref{eqn:var_form_decoupled}, set $\gone_h=\Pi_h\left(\frac{\uone_h}{M}e^{-\Phi_h/\Theta}\right)$ and $\gtwo_h = \Pi_h\left(\frac{\utwo_h}{M}e^{-\Phi_h/\Theta}\right)$.\\
  Using $ \Pi_h\left(\frac{u_h}{M}e^{-\Phi_h/\Theta}\right)=\frac{u_h}{M}e^{-\Phi_h/\Theta} + \phi_h(u_h)$, we get
\begin{equation}\label{eqn:putting_interpolation_error_in_clean}
\begin{alignedat}{2}
\mathcal{A}_{E_h} \bigg(\uone_h,\frac{\uone_h}{M}e^{-\Phi_h/\Theta} \bigg) + \mathcal{A}_{E_h} \bigg(\utwo_h,\frac{\utwo_h}{M}e^{-\Phi_h/\Theta} \bigg) = \mathcal{S}\bigg( \sigma_h ,\frac{u_h}{M}e^{-\Phi_h/\Theta} \bigg)
\\
 -\mathcal{B}^{\two} \bigg ( \rtwo_h,\frac{\uone_h}{M}e^{-\Phi_h/\Theta} \bigg) -\mathcal{B}^{\one} \bigg( \rone_h,\frac{\utwo_h}{M}e^{-\Phi_h/\Theta} \bigg)
 - \tau_h(u_h, \sigma_h, r_h).
\end{alignedat}
\end{equation}
We have from \eqref{eqn:hamiltonian_vlasov_identity_clean}, the upwind flux definitions, and the continuity of $\Phi_h$ that
\begin{align}\label{eqn:A_bound_clean}
\mathcal{A}_{E_h}&\bigg( \uone_h, \frac{\uone_h}{M}e^{-\Phi_h/\Theta} \bigg) + \mathcal{A}_{E_h}\bigg( \utwo_h, \frac{\utwo_h}{M}e^{-\Phi_h/\Theta} \bigg)
\geq\nonumber\\
&\quad\int_{{\comp}}
\left(\frac{\veps}{\Delta t} + \frac{\omega}{\veps} \right)\frac{u_h^2}{M}e^{-\Phi_h/\Theta} dvdx
+\frac{1}{2}\int_{X}|E_h| \left(u_h^+(x,0)^2 + u_h^-(x,0)^2\right)\frac{e^{-\Phi_h/\Theta}}{M_0}dx.\eqnnum
\end{align}
Now we deal with the source term $\mathcal{S}$, which is defined in \eqref{eqn:operatorQ}. From Cauchy-Schwarz and Young's inequalities,
\begin{equation}\label{eqn:S_bound_clean}
\mathcal{S} \bigg( \sigma_h, \frac{u_h}{M}e^{-\Phi_h/\Theta} \bigg) 
\leq
\frac{1}{2} \int_{X} \frac{\omega}{\varepsilon}  \sigma_h^2 e^{-\Phi_h/\Theta}dx  + \frac{1}{2}\int_{\comp}\frac{\omega}{\varepsilon}  \frac{u_h^2}{M} e^{-\Phi_h/\Theta} dvdx.
\end{equation}
Next, we deal with the coupling terms $\mathcal{B}^{\one}$ and $\mathcal{B}^{\two}$,
\begin{align}
\mathcal{B}^{\one}& \bigg( \rone_h, \frac{\utwo_h}{M}e^{-\Phi_h/\Theta} \bigg) + \mathcal{B}^{\two} \bigg( \rtwo_h, \frac{\uone_h}{M}e^{-\Phi_h/\Theta} \bigg)
\leq \frac{1}{2}  \int_X|E_h|\frac{r_h^2 }{M_{0}}e^{-\Phi_h/\Theta}  dx 
\nonumber\\
&\quad\qquad+ \frac{1}{2}  \int_{E_h < 0}|E_h|\frac{u^{-}_h(x,0)^2 }{M_{0}}e^{-\Phi_h/\Theta}  dx\eqnnum
+\frac{1}{2}  \int_{E_h > 0}|E_h|\frac{u^{+}_h(x,0)^2 }{M_{0}}e^{-\Phi_h/\Theta}dx .\label{eqn:B_bound_clean2}
\end{align}
Using bounds \eqref{eqn:A_bound_clean}--\eqref{eqn:B_bound_clean2} in \eqref{eqn:putting_interpolation_error_in_clean}, and grouping the $\omega u_h$, $u_h^+$, and $u_h^-$ terms,
\begin{align} \label{eqn:before_bounding_below}
\int_{{\comp}}
\bigg(\frac{\varepsilon}{\Delta t} &+ \frac{\omega}{2\veps} \bigg)\frac{u_h^2}{M}e^{-\Phi_h/\Theta} dvdx + \frac{1}{2}\int_{X}|E_h| \frac{(\hat{u}_h)^2}{M_0}e^{-\Phi_h/\Theta}dx
\leq \nonumber\\
&\quad
\frac{1}{2}  \int_X|E_h|\frac{r_h^2 }{M_0}e^{-\Phi_h/\Theta}  dx 
+\frac{1}{2} \int_{X} \frac{\omega}{\varepsilon} \sigma_h^2 e^{-\Phi_h/\Theta}  dx 
+ \tau_h(u_h, \sigma_h, r_h).
\end{align} 
\noindent
We bound the volumetric term on the left-hand side of \eqref{eqn:before_bounding_below} below by first splitting it, and then using both $\upsilon_h^2 \leq \int\frac{u_h^2}{M} dv$ and an inverse inequality,
\begin{align}\label{eqn:splitting_volume_term}
\int_{{\comp}}
\bigg(\frac{\veps}{\Delta t} +& \frac{\omega}{2\veps} \bigg)\frac{u_h^2}{M}e^{-\Phi_h/\Theta} dvdx 
\geq\nonumber\\
&\int_X
   \left(\frac{\veps}{2\Delta t} + \frac{\omega}{2\veps} \right)\upsilon_h^2 e^{-\Phi_h/\Theta} dx 
   +
   \frac{C\Delta v\veps}{2\Delta t}\int_X
    \frac{s_h^2}{M_0}e^{-\Phi_h/\Theta} dx.
\end{align}
Plugging \eqref{eqn:splitting_volume_term} into \eqref{eqn:before_bounding_below}, multiplying the resulting inequality by $\veps$, and then using the elementary inequality 
$
g(x) \leq \frac{\|g\|_{L^{\infty}}}{C + \|g\|_{L^{\infty}}} (C + g(x)) 
$
leads to
\begin{align}
   \frac{1}{2}\int_X
   \bigg(&\frac{\veps^2}{\Delta t} + \omega \bigg)\upsilon_h^2 e^{-\Phi_h/\Theta} dx 
   +
   \frac{\veps}{2}\int_X \left(\frac{C\Delta v \veps}{\Delta t} + |E_h| \right)
    \frac{s_h^2}{M_0}e^{-\Phi_h/\Theta} dx \nonumber\\
\leq \,\,&
\frac{\veps}{2}\left(\frac{\|E_h\|_{L^\infty}}{\sfrac{C\Delta v\veps}{\Delta t} + \|E_h\|_{L^\infty}} \right)\int_X\left(\frac{C\Delta v\veps}{\Delta t} + |E_h| \right)\frac{r_h^2 }{M_0}e^{-\Phi_h/\Theta}  dx \label{eqn:converge_final_bound}
\\
&+\frac{1}{2} \left(\frac{\omega_{\max}}{\sfrac{\veps^2}{\Delta t} + \omega_{\max}} \right) \int_{X} \left(\frac{\veps^2}{\Delta t} + \omega \right)  \sigma_h^2 e^{-\Phi_h/\Theta}dx  
+\varepsilon \tau_h(u_h, \sigma_h, r_h). \nonumber
\end{align}

This concludes the proof for the $(\|\cdot\|_{_{\textup{NLS}_1}},\kappa_{_{\textup{NLS}_1}})$ case. If $\normweight > 0$, we derive the bound for $(\|\cdot\|_{_{\textup{NLS}_2}},\kappa_{_{\textup{NLS}_2}})$ by splitting the left-hand side of \eqref{eqn:splitting_volume_term} differently as
\begin{gather}\eqnnum
   \int_{\comp}
   \left(\frac{\varepsilon}{\Delta t} + \frac{\omega}{2\varepsilon} - \varepsilon^{3/2}\right)\frac{u_h^2}{M}e^{-\Phi_h/\Theta} dvdx 
   +
   \int_{\comp}
   \varepsilon^{3/2} \frac{u_h^2}{M}e^{-\Phi_h/\Theta} dvdx.
\end{gather}
Applying similar inequalities as used in \eqref{eqn:splitting_volume_term} and \eqref{eqn:converge_final_bound} yields the result.
\end{proof}

\section{Acceleration methods\label{sec:solver_strategies}}

In this section, we describe the acceleration methods employed to speed up the solvers considered in this paper. In Section \ref{sec:new_solver_formulation}, we developed and analyzed a new, low-dimensional fixed-point formulation. While standard fixed-point iteration may be slow, we can use \eqref{eqn:fixed_point_iteration} as a framework on which to build faster methods. In the ensuing sections, we outline two different acceleration strategies which may be used independently or together.  The first is Anderson Acceleration (AA), which uses previous residuals of the fixed-point iteration to select the next update. We recall AA in Section \ref{sec:anderson_acceleration}. The second, which is inspired by the diffusion synthetic acceleration (DSA) method used in radiation transport \cite{Adams-Larsen-2002,Alcouffe-1976,Alcouffe-1977}, uses the drift-diffusion equations to accelerate the iterations when $\veps$ is small.  This acceleration strategy is derived in Section \ref{sec:dsa}.

\subsection{Anderson Acceleration\label{sec:anderson_acceleration}}
Consider a generic fixed-point problem, $y = G(y)$ with $G$ defined on some Hilbert space.  If $G$ is a contraction, this equation can be solved with fixed-point iteration, 
$y^{k+1} = G(y^k)$. AA can speed up these fixed-point iterations, and even result in convergence when $G$ is otherwise not contractive \cite{pollock2019anderson}. The algorithm is given below as it appears in \cite{walker2011anderson}.  We set the relaxation parameter $\beta^k = 1$, as is done in the  analysis in \cite{walker2011anderson}.  However, smaller values may be needed to ensure convergence and $\beta^k$ may even be chosen adaptively \cite{evans2018proof}. 

\begin{algorithm}\caption{Anderson Acceleration - AA}
Given $y^0$, $m\geq 1$, and $\beta^k \in (0,1]$,
set $y^1 = G(y^0)$.
For $k = 1,2,...$
Set $m^k = \min(m,k)$.
Set $r^k:=G(y^k) - y^k$.
At iteration $k$, determine $\alpha^{k}$ that solves
\begin{equation}
\min_{\alpha = (\alpha_0,...,\alpha_{m^{k}})} \left\| \sum\nolimits_{i = 0}^{m^k} \alpha_i r^i \right\| \text{ s.t. }
 \sum\nolimits_{i} \alpha_i = 1.
\end{equation}

Set $y^{k+1} = (1-\beta^k)\sum_{i=0}^{m^k} \alpha_i^{k}y^{k-m^k+i} + \beta^k\sum_{i=0}^{m^k} \alpha_i^{k}G(y^{k-m^k+i})$. 
\end{algorithm}

Similar to the generalized minimal residual method (GMRES) \cite{saad1986gmres} for linear systems, an important practical aspect of AA is the choice of $m$ based on memory constraints and the condition number of the resulting least squares problem, which scales poorly with $m$ \cite{ni2010linearly}.  The new fixed-point formulation \eqref{eqn:fixed_point_problem} is helpful here since it is posed on a lower dimensional space that enables the storage of more solution vectors.   In Section \ref{sec:numerical_tests}, we investigate numerically the performance of AA applied to \eqref{eqn:fixed_point_problem}.
\subsection{The drift-diffusion accelerator\label{sec:dsa}}

The Picard iteration in Algorithms \ref{alg:nest} and \ref{alg:nls} becomes less effective as $\veps$ gets small, so at some point, acceleration or preconditioning becomes necessary. Even in the simple case of a fixed electric field, the contraction constants for NLS \eqref{eqn:norm_def_1_const},\eqref{eqn:norm_def_2_const} and NEST \eqref{eqn:contraction_constant_prediction} tend to one as  $\veps \to 0$.  To address this problem, we  implement an acceleration strategy which relies on two key facts.  First is that the solution of the simplified Boltzmann-Poisson system \eqref{eqn:model} approaches the solution of a drift-diffusion-Poisson system as $\veps \to 0$ \cite{abdallah2004, masmoudi2007diffusion}. Second, because  it is independent of velocity, the drift-diffusion-Poisson system is much cheaper to solve than the simplified Boltzmann-Poisson system \eqref{eqn:model}.  

The drift-diffusion Poisson system takes the form
\begin{subequations}\label{eqn:drift-diffusion-equation}
\begin{align}\label{eqn:drift-diffusion-equation-rho}
&\partial_t \rho_0 - \nabla_x \cdot (\omega^{-1} \nabla_x \rho_0) + \nabla_x \cdot(\omega^{-1}E\rho_0) = \int q dv, \quad \rho_0|_{\partial X} = \int f_- dv,\\
&\qquad \, E_0 = \nabla_x \Phi_0, \qquad  \Delta_x\Phi_0 =\rho_0 - D,  \quad  \Phi_{0}|_{\partial X} = \potbound.
\end{align}
\end{subequations}
In radiation transport, the use of diffusion equations to accelerate  iterative methods for their kinetic antecedents is referred to as diffusion synthetic acceleration (DSA) \cite{Adams-Larsen-2002}.  We borrow from the nomemclature and refer to  the use of accelerators based on \eqref{eqn:drift-diffusion-equation} as drift-diffusion synthetic acceleration (DDSA).

A DDSA correction for the NEST algorithm (Algorithm \ref{alg:nest}) was derived in \cite{laiu2019fast} to correct the $\rho$ iterate. We seek a similar DDSA correction for the NLS algorithm (Algorithm \ref{alg:nls}), which has the additional complication that $\widetilde{f}$ and $\rho$ are iterated simultaneously, so we must derive a correction for $\tilde{f}$ as well. However, we can use the fact that $f_{\veps} \xrightarrow{\varepsilon \rightarrow 0} M\rho_0$ to construct a low-order approximation $\tilde{f}_\veps \approx M(0) \rho_0$, where $\rho_0$ is the solution to \eqref{eqn:drift-diffusion-equation}.

To simplify the discussion, we formally derive the drift-diffusion synthetic accelerator using a one-dimensional steady-state form of \eqref{eqn:time_discrete}  with the drift-diffusion scaling, under the assumptions that the solution $f_{\veps}$ is sufficiently smooth and the electric field $E$ is fixed and $\veps$-independent%
\footnote{In practice, we still allow $E$ to change in  each iteration in the implementation.}:
\begin{gather}\label{eqn:dd-continuous}
\frac{\omega_\ast}{\veps}f_{\veps} + v \partial_x f_{\veps} + E \partial_v f_{\veps}= \veps q_\ast + \frac{\omega}{\veps} M \mathcal \rho_{\veps},
\end{gather}
where $q_\ast = \Delta t^{-1}f_{\veps}^{n} +q^{n+1}$ incorporates previous time step information and $\omega_\ast = \omega  + \varepsilon^2\Delta t^{-1}$.
With the notation introduced in \eqref{eqn:velocity_decomp} and \eqref{eq:subdomain_traces_cont}, let $\fonetilde_{\veps} := \trace^{\one}_{E} \fone_{\veps}$, and $\ftwotilde_{\veps} := \trace^{\two}_{E} \ftwo_{\veps}$. Then \eqref{eqn:dd-continuous} is equivalent to
\begin{subequations}\label{eqn:dd-continuous-dd} 
\begin{align}
\frac{\omega_\ast}{\veps}\fone_{\veps} + v \partial_x \fone_{\veps} + E \partial_v \fone_{\veps}= \veps \qone_\ast +\frac{\omega}{\veps} \Mone  \rho_{\veps},
\quad
\fone_{\veps} \big| _{B^{\one}} = \ftwotilde_{\veps}, \\
\frac{\omega_\ast}{\veps}\ftwo_{\veps} + v \partial_x \ftwo_{\veps} + E \partial_v \ftwo_{\veps}= \veps \qtwo_\ast + \frac{\omega}{\veps} \Mtwo  \rho_{\veps},
\quad
\ftwo_{\veps} \big| _{B^{\two}} = \fonetilde_{\veps},
\end{align}
\end{subequations}
where $B^{\one}= \{(x,v) \in Z : E>0, v=0 \}$ and $B^{\two}= \{(x,v) \in Z : E<0, v=0 \}$.
When applied to \eqref{eqn:dd-continuous-dd}, the $k+1$-th iterate of the NLS algorithm takes the form $(\rho_{\veps}^{k_\ast},\widetilde{f}_{\veps}^{{k_\ast}}) = (P f_{\veps}^{k_*},\,\trace_E f_{\veps}^{k_*})$, with $f_{\veps}^{k_*}$ satisfies
\begin{subequations}\label{eqn:dd-continuous-dd-NLS} 
\begin{align}
\frac{\omega_\ast}{\veps}\fone[k_\ast]_{\veps} + v \partial_x \fone[k_\ast]_{\veps} + E \partial_v \fone[k_\ast]_{\veps}
	&= \veps \qone_\ast +\frac{\omega}{\veps} \Mone \mathcal  \rho_{\veps}^ {k},
\,\,
\fone[k_\ast]_{\veps} \big| _{B^{\one}}  = \ftwotilde[k]_{\veps}, \\
\frac{\omega_\ast}{\veps}\ftwo[k_\ast]_{\veps}+ v \partial_x \ftwo[k_\ast]_{\veps} + E \partial_v \ftwo[k_\ast]_{\veps}
	&= \veps \qtwo_\ast + \frac{\omega}{\veps} \Mtwo  \rho_{\veps}^{k},
\,\,
\ftwo[k_\ast]_{\veps} \big| _{B^{\two}}  = \fonetilde[k]_{\veps}. 
\end{align}
\end{subequations}
Let $\psi_{\veps} = f_{\veps} - f_{\veps}^{k_\ast}$ and $\phi_{\veps} = \intv_{\veps} \psi_\veps$. Clearly, if we know $\phi_\veps$, then we would not need to iterate since $\rho_\veps = \rho_\veps^{k_*} + \phi_\veps$. The goal is to obtain a low order approximation to $\phi_\veps$. Subtracting \eqref{eqn:dd-continuous-dd-NLS}  from \eqref{eqn:dd-continuous-dd} and applying $\intv$ to the resulting equation gives
\begin{gather}\label{eqn:phi_eqn}
\frac{\veps}{\Delta t} \phi_{\veps} + \partial_x \left(\int v  \psi_{\veps} dv \right) = |E|(\widetilde{f}_{\veps}^{{k_\ast}} - \widetilde{f}_{\veps}^k) + \frac{\omega}{\veps}(\rho_{\veps}^{{k_\ast}} - \rho_{\veps}^{k}).
\end{gather} 
We approximate $\phi_{\veps}$ by considering the right-hand side of \eqref{eqn:phi_eqn} as a source, and formally taking $\veps\rightarrow 0$ on the left-hand side which gives the one dimensional implicit Euler discretization of the drift-diffusion equation \eqref{eqn:drift-diffusion-equation-rho}, 
\begin{equation}\label{eqn:drift_diffusion_operator}
D_E \phi_0 := \veps \left(\frac{1}{\Delta t}\phi_0 - \partial_x(\omega^{-1}\partial_x \phi_0) + \partial_x(\omega^{-1}E\phi_0)   \right) = |E|(\widetilde{f}_{\veps}^{{k_\ast}} - \widetilde{f}_{\veps}^k) + \frac{\omega}{\veps}(\rho_{\veps}^{{k_\ast}} - \rho_{\veps}^{k}),
\end{equation}
together with the boundary condition $\phi_0|_{\partial X} = 0$.
The correction to the iterate is $\rho_{\veps}^{k+1} = \rho_{\veps}^{{k_\ast}} + \phi_0$.
To correct the numerical trace values, we approximate $\psi_{\veps}$ by $M\phi_0$ since $\psi_{\veps}\to M\phi_0$ in the drift-diffusion limit. The update is $\widetilde{f}_{\veps}^{k+1} = \widetilde{f}_{\veps}^{{k_\ast}} + M(0)\phi_0$.

The drift-diffusion operator $D_E$ in \eqref{eqn:drift_diffusion_operator} is discretized using the direct discontinous Galerkin method with interface correction (DDG-IC), first developed in \cite{Liu2010} for convection-diffusion problems.  Details of the fully discrete algorithm for the drift-diffusion equations (used herein) can be found in \cite{laiu2019fast}. For the nonlinear iterations, the electric field changes each iteration so the drift-diffusion operator is actually $D_{F(\rho_h^k)}$.
 One drawback of using this discretization is that it may not be equal to the limiting kinetic discretization as $\veps \rightarrow 0$ (although the limiting discretization is itself a valid discretization of the drift-diffusion limit).  While our tests do demonstrate accelerated convergence when $\veps$ is small, the accelerator may destabilize the Picard iteration if $\Delta t$ is too large, even for a fixed $E$.  In the context of radiation transport, accelerated methods that preserve the stability of Picard iteration have been derived, and work robustly across a range of discretization parameters \cite{Alcouffe-1976, Alcouffe-1977}. We leave the use of such accelerators for this problem for future work.

\section{Numerical Tests\label{sec:numerical_tests}}
In this section, we compare the solvers based on the new fixed point formulation (Algorithm \ref{alg:nls}) with the nested iterative formulation developed in \cite{laiu2019fast} (Algorithm \ref{alg:nest}). We compare the total number of sweeps required to run a simulation to completion since the number of sweeps is directly related to the computational effort. We also report the total runtime.

Because the new fixed point formulation is on a lower dimensional space than the phase space, we may effectively employ Anderson Acceleration with much lower spatial complexity than if it were formulated on the entire phase space. We test the methods with and without Anderson Acceleration, and with and without drift-diffusion synthetic acceleration (DDSA).

 In the plots and tables that follow, the new nonlinear sweeping algorithm (Algorithm \ref{alg:nls}) will be denoted by NLS, and the nested algorithm  (Algorithm \ref{alg:nest}) by NEST. The methods with Anderson Acceleration are followed by `AA', and those without Anderson Acceleration are followed by `PIC', short for `Picard'. If DDSA is used in conjunction with any of the solvers, `+DDSA' is appended to the end of the name. For example, NLS-PIC+DDSA means the NLS method with Picard iteration and drift-diffusion synthetic acceleration.

\subsection{Problem setting\label{sec:diode_details}}
The tests in Sections~\ref{sec:test_single_scale}--\ref{sec:test_iter_rate} consider a one-dimensional diode with several variations of the collision frequency.
We recall the scaled model from \cite{laiu2019fast} which is derived from a nondimensionalization of the simplified Boltzmann-Poisson system: 
\begin{subequations}
\begin{align}\label{eqn:nondim_model}
&\varepsilon \partial_t f + v \partial_x f + \beta^2 E \partial_v f = \frac{\omega}{\varepsilon}(M_{\alpha^2} \rho - f),\\
E = \partial_x &\Phi, \quad \partial_x^2 \Phi = \frac{\gamma^2}{\beta^2}(\zeta \rho - D), \quad f|_{\partial X_-} = DM_{\alpha^2}|_{\partial X_-}.
\end{align}
\end{subequations}
The parameters $\alpha$, $\beta$, $\gamma$, and $\zeta$ are nondimensional quantities. After setting the physical quantities from which they are derived to those used in \cite[Section 4.1]{laiu2019fast}, the nondimensionial quantities are $\alpha=0.129$, $\beta = 0.803$, $\gamma = 1$, and $\zeta = 1$.

The nondimensionalized spatial domain is $X = [0,0.6]$. The boundary conditions for the Poisson problem are set to $\Phi(0) = 0$ and $\Phi(0.6) = 1$.
The remaining parameter $\varepsilon$ changes depending on the test.
The same doping profile $D(x)$ used in \cite{laiu2019fast}, after non-dimensionalization, is 500 at the boundaries with a smooth, but sharp, transition to a value of 2 in between the boundaries.

In Section~\ref{sec:convergence_rate}, numerical tests are performed on problems with a manufactured solution. The problem setting and implementation details are described therein.

\subsection{Discretization and solver details\label{sec:test_disc_details}}
 The computational domain for the phase space, $\comp \subset Z$, is $\comp = [0,L]\times[-v_{\max},v_{\max}] = [0,0.6]\times[-2,2]$, which corresponds to a diode of length $0.6 \mu m$. The velocity space is truncated so that the tail of $M_{\alpha^2}$ is below machine precision outside the computational domain.
The initial condition is always set according to the doping profile:  $f_h|_{t=0}(x,v) = \Pi_h D(x)M_{\alpha^2}(v)$. 
The incoming data at the artificial boundaries $v = \pm 2$ are set to zero.

The iterative solvers are always initialized with the solution from the previous time step or the initial condition in the case of the first time step. The Anderson Acceleration restart parameter is set at $m=15$ for both the NLS and NEST methods to ensure they have similar memory complexity. The restart for NEST-AA was $m=3$ in \cite{laiu2019fast}, but the memory footprint of NEST is potentially larger due to the inner GMRES solve. We have observed inner GMRES solves in NEST taking up to 15 iterations, which is the reason for this choice. The least squares problem arising from Anderson acceleration is solved via QR decomposition using the Eigen library \cite{eigenweb}.

In Sections \ref{sec:test_single_scale} and \ref{sec:test_silicone_diode}, we report the total execution time and the total number of sweeps to finish the entire simulation for each problem configuration with various $\Delta t$. Most tests in these sections are performed on the same uniform rectangular mesh with $200^2$ elements, resulting in 120,000 degrees of freedom in the phase space. The final time $T_f$ is always set to $0.5$ since the solutions are near the steady-state by then. For each test, we include two separate sections of the table for the methods with and without DDSA. We explicitly mention any deviation from these default mesh parameters when they occur.

The iterations are stopped when the relative  $\ell^2$ residual is less than a specified tolerance. The $\ell^2$ norm refers to the norm of the vector of coefficients for the DG representations of either $\rho_h$ for NEST, or $(\rho_h, \widetilde{f}_h)$ for NLS.  For NLS, the tolerance for this norm is set to $10^{-8}$. NEST requires two tolerances, one for each level of iteration.  As in \cite{laiu2019fast}, the tolerance for the outer loop is set to $10^{-8}$, and the relative tolerance for the inner loop is set to $10^{-10}$.  It is likely the NEST could be made more efficient by using an adaptive strategy for the inner sweeping iterations, such as starting with a larger relative tolerance for the inner loop, and decreasing as needed. This possibility is not explored herein.

 In practice, iterative methods which converge very slowly should have a modified tolerance since they can exhibit so called ``false convergence'' \cite{Adams-Larsen-2002}. If the stopping criteria is that the norm of the difference of two iterates be less than $\eta$, and if $\kappa$ is the contraction constant, then the actual error between the last iterate and the exact fixed point may be as large as $\frac{\kappa \eta}{1 - \kappa}$. Therefore, when $\kappa$ is close to one, the stopping criteria should be scaled by $1-\kappa$. An analytic estimate for $\kappa$ for both NEST and NLS is given by \eqref{eqn:contraction_constant_prediction} when $\varepsilon$ is small. While we do not have an estimate for $\kappa$ when DDSA is used, it should have the effect of reducing $\kappa$ when $\veps$ is small, which both speeds up convergence and results in less accuracy loss due to slow convergence.

For consistency across the tests, we use a static tolerance that does not take $\kappa$ into account. This gives the methods without DDSA an advantage when $\varepsilon$ is small since the true error may actually be $\kappa(1-\kappa)^{-1}$ times larger than the final residual. Even without this advantage, DDSA still results in faster convergence for small $\varepsilon$. We note that even in the worst case we test when $\varepsilon = 0.002$ and $\Delta t = 0.25$, the analytic estimates on $\kappa$ suggest that the methods without DDSA lose at most five significant digits of accuracy. Since the relative tolerance is set to $10^{-8}$, the converged solution without DDSA should still have several significant digits of accuracy.

For all methods, we limit the total number of sweeps per time step to 50,000. One sweep for NEST is defined to be the solution of \eqref{eqn:nested_sweep_1} and \eqref{eqn:nested_sweep_2}, and one sweep for NLS is the solution of \eqref{eqn:nls-explicit_1} and \eqref{eqn:nls-explicit_2}. If a method fails to converge within this limit or if the residuals blow up, the simulation is terminated. We report three types of non-convergence.

\begin{enumerate}
\item INF - Divergence to infinity. The iterations are unstable and the residuals diverge to infinity.
\item ($r$) - Did not converge, with a final relative residual of $r$.
\item FC - Falsely converged. Since a successfully converged solution for these problems has a squared $L^2$ norm of around $10^5$, we say any solution with a squared $L^2$ norm greater than $2\times10^5$ or below $5\times10^4$ falsely converged.
\end{enumerate}

\subsection{Single-scale test\label{sec:test_single_scale}}

In this section, we test the single scale case where the collision frequency $\omega$ does not vary in space. In this section, $\omega = 1$ in the entire domain and $\varepsilon = 0.2$. 

Efficiency  results are reported in Table \ref{tab:single_scale0.2}. The fastest methods for a given timestep were the NLS based methods except for the largest timestep, where NEST-AA+DDSA was the fastest. However, DDSA does not yield any significant benefit, since the solution is far from the drift-diffusion regime.

\begin{table}[h]
\centering
\begin{tabular}{|l|ll|ll|ll|ll|}
\hline
  &   \multicolumn{2}{|c|}{NLS-AA} &   \multicolumn{2}{|c|}{NEST-AA} &   \multicolumn{2}{|c|}{NLS-PIC} &   \multicolumn{2}{|c|}{NEST-PIC}   \\
$\Delta t$ & time(s)  &   swps.    & time(s)  &   swps.    & time(s)  &   swps.    & time(s)  &   swps.     \\
\hline
\multicolumn{9}{|c|}{Without DDSA}\\
\hline
$T_f/2^{1}$ &    3.98 &    643  &    4.02 &    894 & \multicolumn{2}{c|}{R(8.4E-1)}& \multicolumn{2}{c|}{R(6.7E-1)} \\
$T_f/2^{2}$ &    1.53 &    274  &    2.72 &    596 & \multicolumn{2}{c|}{R(7.2E-1)}& \multicolumn{2}{c|}{R(7.0E-1)} \\
$T_f/2^{3}$ &    2.02 &    330  &    4.06 &    865 & \multicolumn{2}{c|}{R(7.6E-1)}& \multicolumn{2}{c|}{R(7.7E-1)} \\
$T_f/2^{4}$ &    2.37 &    293  &    5.45 &   1214 & \multicolumn{2}{c|}{R(8.9E-1)}& \multicolumn{2}{c|}{R(6.3E-1)} \\
$T_f/2^{5}$ &    2.39 &    380  &    7.24 &   1530 & \multicolumn{2}{c|}{R(1.6E-1)}& \multicolumn{2}{c|}{R(1.8E-1)} \\
$T_f/2^{6}$ &    2.64 &    493  &    8.51 &   1894  &    2.67 &    633  &   11.11 &   2494  \\
$T_f/2^{7}$ &    4.33 &    732  &   12.31 &   2591  &    3.95 &    897  &   14.59 &   3271  \\
$T_f/2^{8}$ &    6.89 &   1215  &   18.66 &   4199  &    6.46 &   1470  &   24.93 &   5366  \\
\hline
\multicolumn{9}{|c|}{With DDSA}\\
\hline
$T_f/2^{1}$ &    7.22 &   1116  &    2.11 &    451 & \multicolumn{2}{c|}{INF}& \multicolumn{2}{c|}{R(8.3E-1)} \\
$T_f/2^{2}$ &    8.35 &   1209  &    3.02 &    597 & \multicolumn{2}{c|}{INF}& \multicolumn{2}{c|}{R(8.5E-1)} \\
$T_f/2^{3}$ &    3.12 &    345  &    5.07 &    893 & \multicolumn{2}{c|}{INF}& \multicolumn{2}{c|}{R(8.2E-1)} \\
$T_f/2^{4}$ &    1.93 &    325  &    5.34 &   1237 & \multicolumn{2}{c|}{INF}& \multicolumn{2}{c|}{R(7.7E-1)} \\
$T_f/2^{5}$ &    2.29 &    386  &    6.96 &   1580 & \multicolumn{2}{c|}{R(3.1E-1)}& \multicolumn{2}{c|}{R(3.9E-1)} \\
$T_f/2^{6}$ &    3.37 &    530  &    9.31 &   2077  &    4.79 &    783  &   11.84 &   3287  \\
$T_f/2^{7}$ &    5.10 &    715  &   16.82 &   2761  &    5.09 &    847  &   11.67 &   3246  \\
$T_f/2^{8}$ &    6.94 &   1151  &   23.47 &   4245  &    8.49 &   1420  &   23.90 &   5199  \\
\hline
\end{tabular}
\caption{\textbf{Solver performance on the single scale problem with $\varepsilon = 0.2$.} For the largest timestep ($\Delta t = T_f/2^{1}$), NEST-AA+DSA was the fastest method in terms of number of sweeps and time. For all other $\Delta t$s, one of the NLS based methods was the fastest. The addition of DDSA does not appear to significantly affect the efficiency for this test.\label{tab:single_scale0.2}}
\end{table}

In this next test, $\varepsilon = 0.002$, and the tests are repeated. The results are shown in Table \ref{tab:tab:single_scale0.002}. For a fixed $\Delta t$, NLS-AA+DDSA is the fastest method with the exception of the smallest $\Delta t$, where NLS-AA is the fastest.

\begin{table}[h]
\centering
\begin{tabular}{|l|ll|ll|ll|ll|}
\hline
  &   \multicolumn{2}{|c|}{NLS-AA} &   \multicolumn{2}{|c|}{NEST-AA} &   \multicolumn{2}{|c|}{NLS-PIC} &   \multicolumn{2}{|c|}{NEST-PIC}   \\
$\Delta t$ & time(s)  &   swps.    & time(s)  &   swps.    & time(s)  &   swps.    & time(s)  &   swps.     \\
\hline
\multicolumn{9}{|c|}{Without DDSA}\\
\hline
$T_f/2^{1}$ &   37.91 &  12908  &   22.71 &   8770 & \multicolumn{2}{c|}{R(4.2E-8)}& \multicolumn{2}{c|}{R(1.4E-6)} \\
$T_f/2^{2}$ &   28.40 &   9705  &   23.18 &   8991 & \multicolumn{2}{c|}{R(2.3E-8)}& \multicolumn{2}{c|}{R(1.2E-6)} \\
$T_f/2^{3}$ &   24.30 &   8173  &   31.61 &  12262  &  517.01 & 158905 & \multicolumn{2}{c|}{R(8.6E-7)} \\
$T_f/2^{4}$ &   18.67 &   6357  &   32.02 &  12466  &  546.03 & 182662 & \multicolumn{2}{c|}{R(4.5E-7)} \\
$T_f/2^{5}$ &   14.95 &   4942  &   41.46 &  16236  &  544.54 & 207449 & \multicolumn{2}{c|}{R(1.1E-7)} \\
$T_f/2^{6}$ &   11.09 &   3762  &   37.63 &  13441  &  720.03 & 224085  & 2895.20 & 891543  \\
$T_f/2^{7}$ &   10.02 &   3467  &   32.10 &  11644  &  708.36 & 236401  & 2990.12 & 939791  \\
$T_f/2^{8}$ &   10.78 &   3713  &   29.54 &  11846  &  654.69 & 245274  & 3224.05 & 975130  \\
\hline
\multicolumn{9}{|c|}{With DDSA}\\
\hline
$T_f/2^{1}$ &    0.61 &    160  &    2.69 &    995 & \multicolumn{2}{c|}{INF}& \multicolumn{2}{c|}{FC} \\
$T_f/2^{2}$ &    0.95 &    270  &    4.24 &   1614 & \multicolumn{2}{c|}{INF}& \multicolumn{2}{c|}{FC} \\
$T_f/2^{3}$ &    1.43 &    419  &    6.34 &   2389 & \multicolumn{2}{c|}{INF}& \multicolumn{2}{c|}{FC} \\
$T_f/2^{4}$ &    2.16 &    637  &    9.80 &   3691 & \multicolumn{2}{c|}{INF}& \multicolumn{2}{c|}{R(8.4E-1)} \\
$T_f/2^{5}$ &    3.68 &    958  &   13.94 &   5126 & \multicolumn{2}{c|}{INF}& \multicolumn{2}{c|}{R(8.1E-1)} \\
$T_f/2^{6}$ &    4.88 &   1510  &   19.91 &   7605 & \multicolumn{2}{c|}{INF}& \multicolumn{2}{c|}{R(6.9E-1)} \\
$T_f/2^{7}$ &    7.91 &   2460  &   29.84 &  11323 & \multicolumn{2}{c|}{R(1.5E+0)} &  302.20 &  85437  \\
$T_f/2^{8}$ &   14.49 &   4161  &   44.81 &  17083  &   64.49 &  21488  &  353.31 & 115792  \\
\hline
\end{tabular}
\caption{\textbf{Solver performance on the single scale problem with $\varepsilon = 0.002$.}\label{tab:tab:single_scale0.002}
For the largest timestep $\Delta t = T_f/2^{1}$, NLS-AA+DDSA  is around 50 times faster than NLS-AA. DDSA appears to improve computation time less for smaller $\Delta t$.
The Picard versions of both algorithms perform poorly with and without DDSA. The unaccelerated algorithms eventually converge, but at a rate several orders of magnitude slower than the fastest methods. DDSA can destabilizes the Picard methods except for sufficiently small $\Delta t$.}
\end{table}

\subsection{Silicon diode benchmark problem\label{sec:test_silicone_diode}}

For the tests in this section, $\omega$ varies in space. The first test in this section is a standard benchmark problem of a silicone diode, which was performed in, e.g., \cite{cercignani2000,hu2014}. As in \cite{laiu2019fast}, the resulting dimensionless collision frequency is
\begin{equation}\label{eqn:omega_def}
\omega(x) = 
\begin{cases}
\omega_{\min}, \hspace{5mm} x \in [0.1, 0.5],\\
1, \hspace{10mm} \text{otherwise},
\end{cases}
\end{equation}
with $\omega_{\min} = 0.277$ and $\varepsilon = 0.056$. As in \cite{hu2014,laiu2019fast}, $\omega$ is smoothed out using the same cubic spline transitions from 1 to $\omega_{\min}$.

Efficiency results for the different solvers are shown in Table \ref{tab:silicone}. NLS-AA is consistently the most efficient method across all values of $\Delta t$, in terms of runtime and total sweeps. 
\begin{table}[h]
	\centering
	\begin{tabular}{|l|ll|ll|ll|ll|}
		\hline
		&   \multicolumn{2}{|c|}{NLS-AA} &   \multicolumn{2}{|c|}{NEST-AA} &   \multicolumn{2}{|c|}{NLS-PIC} &   \multicolumn{2}{|c|}{NEST-PIC}   \\
		$\Delta t$ & time(s)  &   swps.    & time(s)  &   swps.    & time(s)  &   swps.    & time(s)  &   swps.     \\
		\hline
		\multicolumn{9}{|c|}{Without DDSA}\\
		\hline
		$T_f/2^{1}$ &    0.32 &     75  &    0.57 &    202  &    0.33 &     87 & \multicolumn{2}{c|}{R(1.4E-1)} \\
		$T_f/2^{2}$ &    0.34 &     97  &    0.85 &    323  &    0.44 &    138 & \multicolumn{2}{c|}{R(1.3E-1)} \\
		$T_f/2^{3}$ &    0.45 &    136  &    1.23 &    480  &    0.60 &    200 & \multicolumn{2}{c|}{R(1.0E-1)} \\
		$T_f/2^{4}$ &    0.54 &    171  &    1.69 &    659  &    0.88 &    299 & \multicolumn{2}{c|}{R(3.5E-2)} \\
		$T_f/2^{5}$ &    0.94 &    258  &    2.33 &    914  &    1.07 &    367  &    5.58 &   1891  \\
		$T_f/2^{6}$ &    1.24 &    361  &    3.57 &   1404  &    1.41 &    478  &    4.78 &   1907  \\
		$T_f/2^{7}$ &    1.74 &    577  &    5.50 &   2195  &    2.12 &    737  &    6.94 &   2781  \\
		$T_f/2^{8}$ &    3.22 &    953  &    8.89 &   3549  &    3.44 &   1182  &   10.91 &   4382  \\
		\hline
		\multicolumn{9}{|c|}{With DDSA}\\
		\hline
		$T_f/2^{1}$ &    0.38 &     84  &    0.62 &    209 & \multicolumn{2}{c|}{INF}& \multicolumn{2}{c|}{R(1.2E+0)} \\
		$T_f/2^{2}$ &    0.46 &    124  &    0.94 &    344 & \multicolumn{2}{c|}{INF}& \multicolumn{2}{c|}{R(7.5E-1)} \\
		$T_f/2^{3}$ &    0.62 &    176  &    1.39 &    517 & \multicolumn{2}{c|}{INF}& \multicolumn{2}{c|}{R(1.1E+0)} \\
		$T_f/2^{4}$ &    0.74 &    215  &    2.59 &    737 & \multicolumn{2}{c|}{INF}& \multicolumn{2}{c|}{R(7.0E-1)} \\
		$T_f/2^{5}$ &    0.97 &    286  &    2.70 &   1038 & \multicolumn{2}{c|}{INF}& \multicolumn{2}{c|}{R(8.3E-1)} \\
		$T_f/2^{6}$ &    1.29 &    381  &    4.79 &   1550 & \multicolumn{2}{c|}{R(9.5E-1)}& \multicolumn{2}{c|}{R(4.3E-1)} \\
		$T_f/2^{7}$ &    1.95 &    584  &    5.86 &   2305  &    4.96 &   1579  &   14.76 &   5705  \\
		$T_f/2^{8}$ &    3.08 &    936  &   10.13 &   3572  &    3.75 &   1142  &   11.04 &   4218  \\
		\hline
	\end{tabular}
	\caption{\textbf{Solver performance on the standard silicone diode problem with $\varepsilon = 0.056$ and $\omega$ from \eqref{eqn:omega_def} with $\omega_{\min} = 0.277$}. \label{tab:silicone} The most efficient solver was NLS-AA, followed closely by NLS-PIC. DDSA did not improve convergence speed for any of the methods, and even causes divergence in some cases.}
\end{table}

To illustrate convergence of the solution under mesh refinement, we plot the electron density $f$ in Figure \ref{fig:final_times}.  For these plots, the underlying computation uses implicit Euler time stepping and is solved with NLS-AA.  For this particular test, we temporarily deviate from the discretization parameters used for the efficiency tests as follows. We shorten the final time to $T_f = 0.05$ so that the solution is not near steady-state. The reference solution shown in Figure \ref{fig:final_times} is calculated with $(\Delta t, \Delta x, \Delta v) = 2^{-10}(T_f, L, 2 v_{\max})$, resulting in 3,145,728 degrees of freedom. We observe that small oscillations in the profile disappear as the mesh is refined.

\begin{figure}[h]
	\centering
	\begin{subfigure}{.32\textwidth}
		\includegraphics[width = \linewidth]{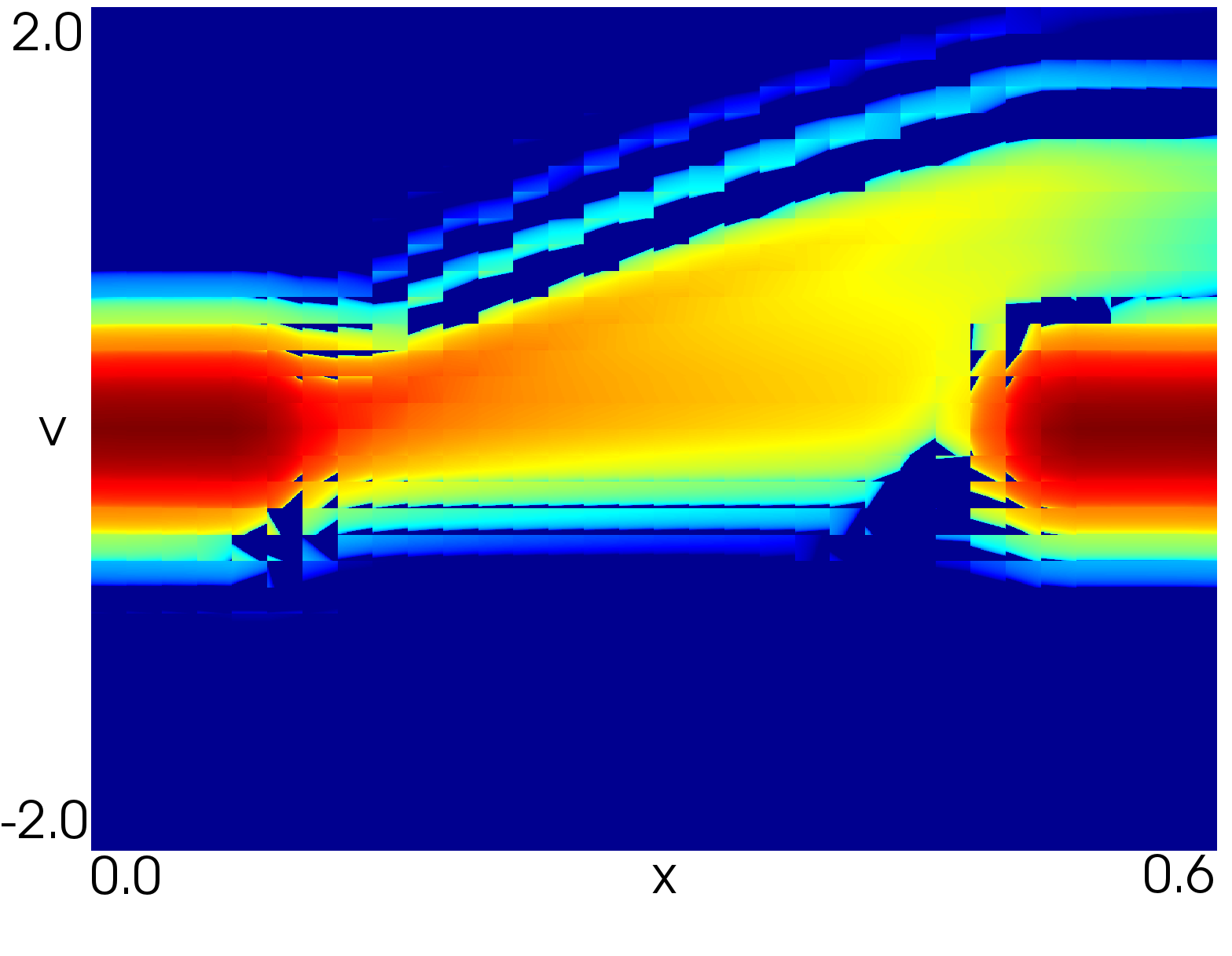}
		\caption*{Level 5}
	\end{subfigure}
	\begin{subfigure}{.32\textwidth}
		\includegraphics[width=\linewidth]{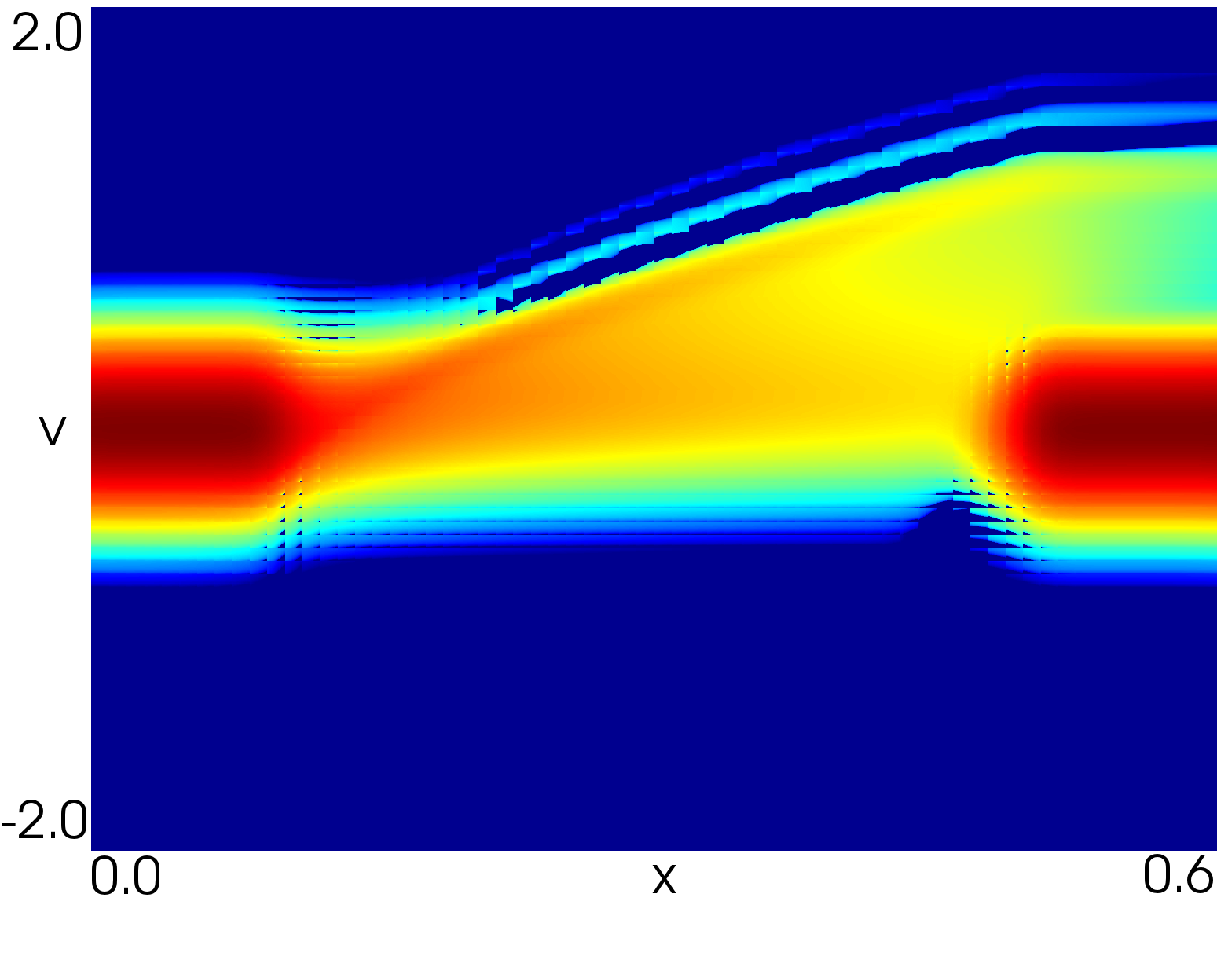}
		\caption*{Level 6}
	\end{subfigure}
	\begin{subfigure}{.32\textwidth}
		\includegraphics[width = \linewidth]{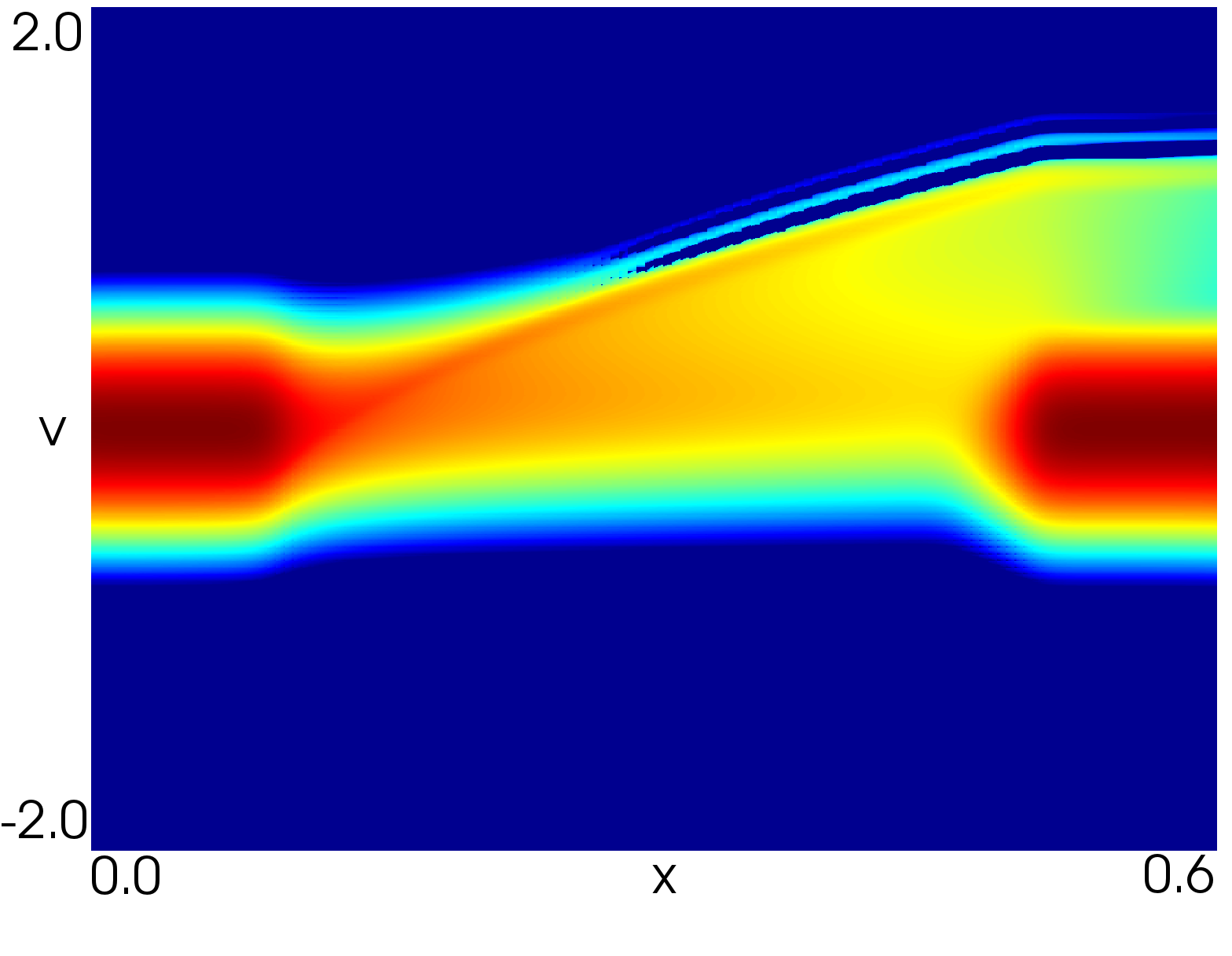}
		\caption*{Level 7}
	\end{subfigure}
	\begin{subfigure}{.32\textwidth}
		\includegraphics[width=\linewidth]{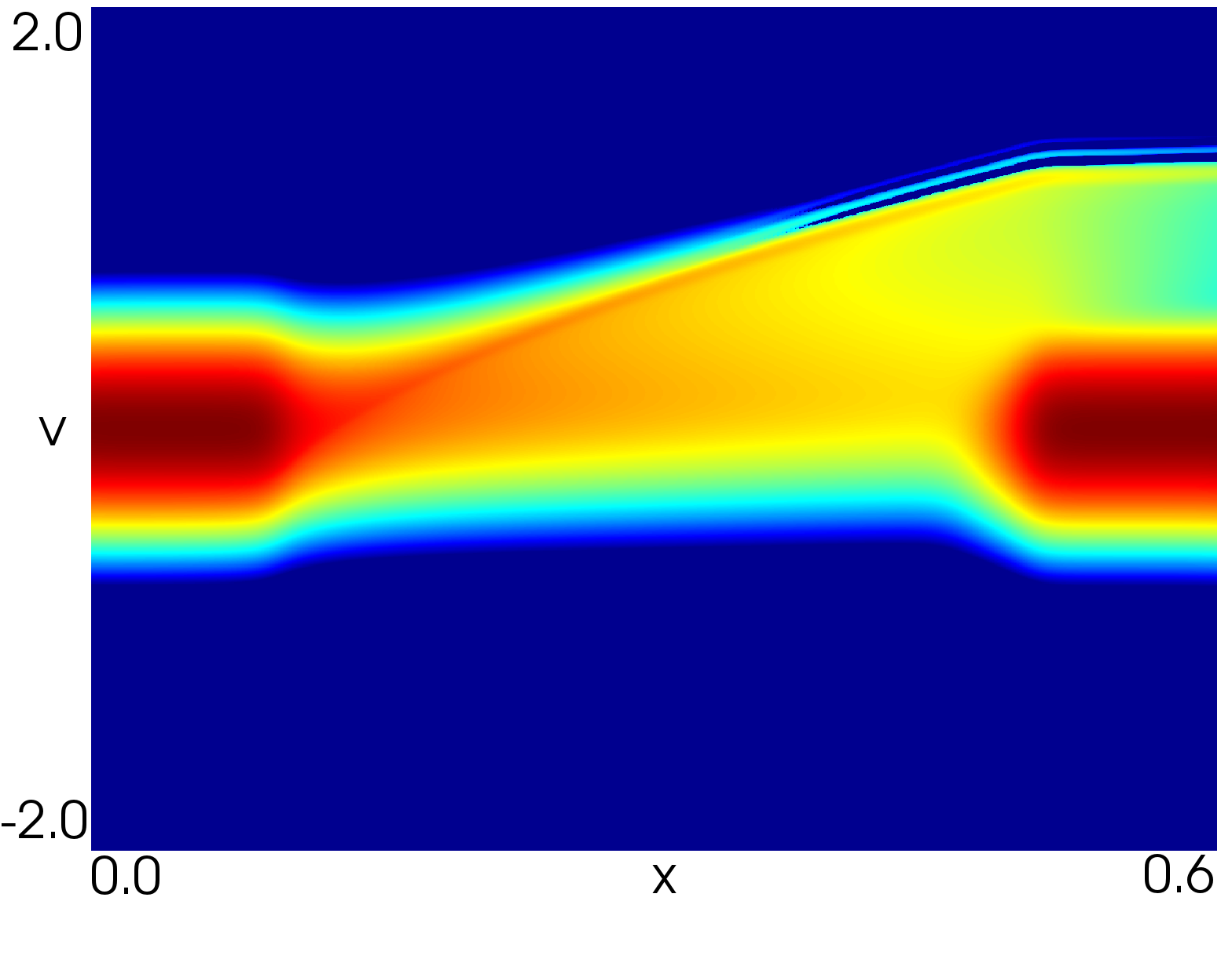}
		\caption*{Level 8}
	\end{subfigure}
	\begin{subfigure}{.32\textwidth}
		\includegraphics[width = \linewidth]{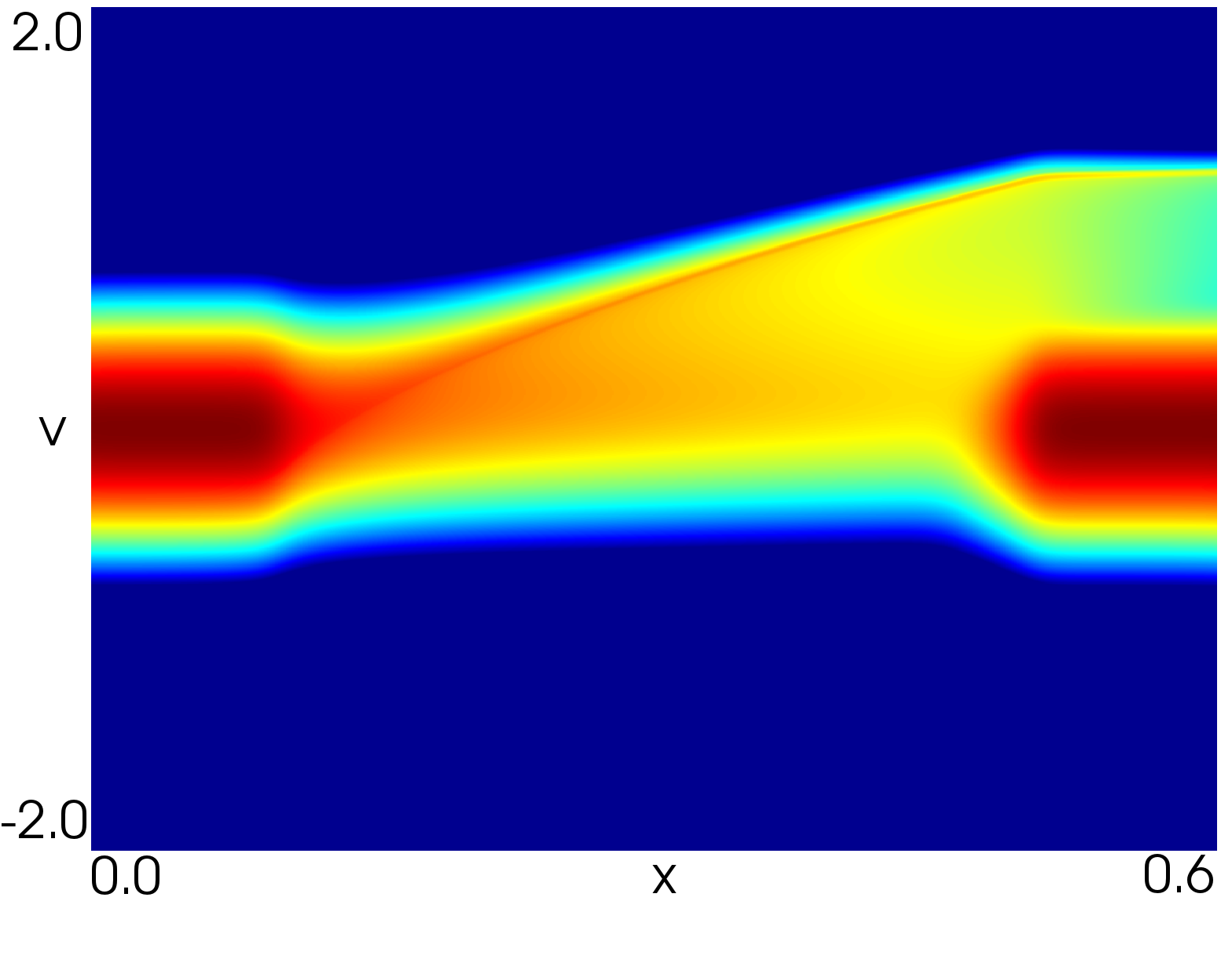}
		\caption*{Reference solution}
	\end{subfigure}
	\begin{subfigure}{.32\textwidth}
		\includegraphics[width=\linewidth]{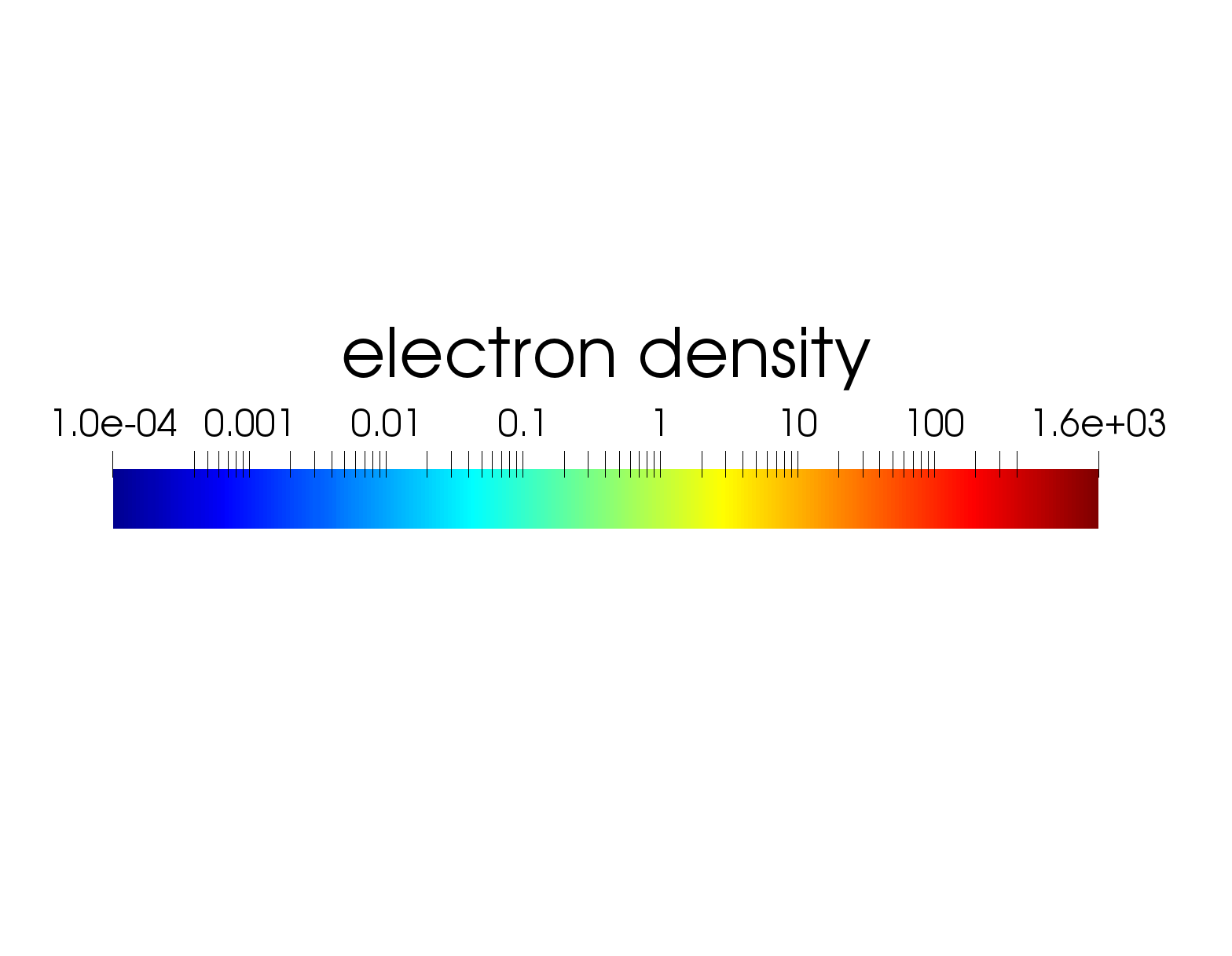}
		\caption*{color legend}
	\end{subfigure}
	\caption{Mesh convergence for the silocone diode benchmark problem in Section~\ref{sec:test_silicone_diode}. The solutions converge to the reference solution as the mesh is refined. $T_f = 0.05$. Level refers to the mesh parameters $(\Delta t, \Delta x, \Delta v) = 2^{\text{-Level}}(T_f, L, 2v_{\max})$.\label{fig:final_times}}
\end{figure}

The next test is a more extreme multiscale problem. This test uses the same problem configuration as the previous silicone diode problem, but with $\omega_{\min}=0.01$ in \eqref{eqn:omega_def}, and $\varepsilon = 0.002$. The results are shown in Table \ref{tab:multiscale}. Similar to the single scale case when $\varepsilon = 0.002$, we see a great improvement from using AA alone. 

\begin{table}[h]
\centering
\begin{tabular}{|l|ll|ll|ll|ll|}
\hline
  &   \multicolumn{2}{|c|}{NLS-AA} &   \multicolumn{2}{|c|}{NEST-AA} &   \multicolumn{2}{|c|}{NLS-PIC} &   \multicolumn{2}{|c|}{NEST-PIC}   \\
$\Delta t$ & time(s)  &   swps.    & time(s)  &   swps.    & time(s)  &   swps.    & time(s)  &   swps.     \\
\hline
\multicolumn{9}{|c|}{Without DDSA}\\
\hline
$T_f/2^{1}$ &    1.02 &    315  &    3.02 &   1166  &   15.45 &   5796  &   69.90 &  25303  \\
$T_f/2^{2}$ &    1.10 &    345  &    3.58 &   1379  &   17.28 &   6510  &   82.06 &  28327  \\
$T_f/2^{3}$ &    1.54 &    489  &    3.96 &   1537  &   21.20 &   7193  &   88.03 &  31220  \\
$T_f/2^{4}$ &    1.44 &    462  &    4.84 &   1795  &   21.60 &   8098  &   98.07 &  35039  \\
$T_f/2^{5}$ &    1.68 &    551  &    5.59 &   2202  &   24.66 &   9285  &  115.26 &  40200  \\
$T_f/2^{6}$ &    3.06 &   1009  &    7.13 &   2814  &   32.14 &  10810  &  164.26 &  46646  \\
$T_f/2^{7}$ &    2.99 &    993  &   10.03 &   3971  &   37.81 &  12673  &  204.00 &  54527  \\
$T_f/2^{8}$ &    4.27 &   1434  &   14.53 &   5772  &   40.06 &  14786  &  223.71 &  63175  \\
\hline
\multicolumn{9}{|c|}{With DDSA}\\
\hline
$T_f/2^{1}$ &    0.43 &    106  &    1.58 &    543 & \multicolumn{2}{c|}{INF}& \multicolumn{2}{c|}{FC} \\
$T_f/2^{2}$ &    0.50 &    134  &    1.77 &    652 & \multicolumn{2}{c|}{INF}& \multicolumn{2}{c|}{FC} \\
$T_f/2^{3}$ &    0.60 &    163  &    2.08 &    768 & \multicolumn{2}{c|}{INF}& \multicolumn{2}{c|}{FC} \\
$T_f/2^{4}$ &    0.73 &    207  &    2.85 &    962 & \multicolumn{2}{c|}{INF}& \multicolumn{2}{c|}{FC} \\
$T_f/2^{5}$ &    0.98 &    283  &    3.29 &   1251 & \multicolumn{2}{c|}{INF}& \multicolumn{2}{c|}{FC} \\
$T_f/2^{6}$ &    1.40 &    409  &    4.61 &   1757 & \multicolumn{2}{c|}{INF}& \multicolumn{2}{c|}{FC} \\
$T_f/2^{7}$ &    1.99 &    591  &    6.73 &   2583 & \multicolumn{2}{c|}{INF}& \multicolumn{2}{c|}{FC} \\
$T_f/2^{8}$ &    3.22 &    951  &   10.59 &   3880  &    9.11 &   2980  &   51.87 &  15511  \\
\hline
\end{tabular}
\caption{\textbf{Solver performance on the multiscale problem with $\varepsilon = 0.002$ and $\omega$ from \eqref{eqn:omega_def} with $\omega_{\min} = 0.01$.} The most efficient method for all $\Delta t$s was NLS-AA+DSA. \label{tab:multiscale}}
\end{table}
\subsection{Verification of convergence rate for NLS\label{sec:test_iter_rate}}

\begin{figure}[h]
\centering
\includegraphics[width=.495\linewidth]{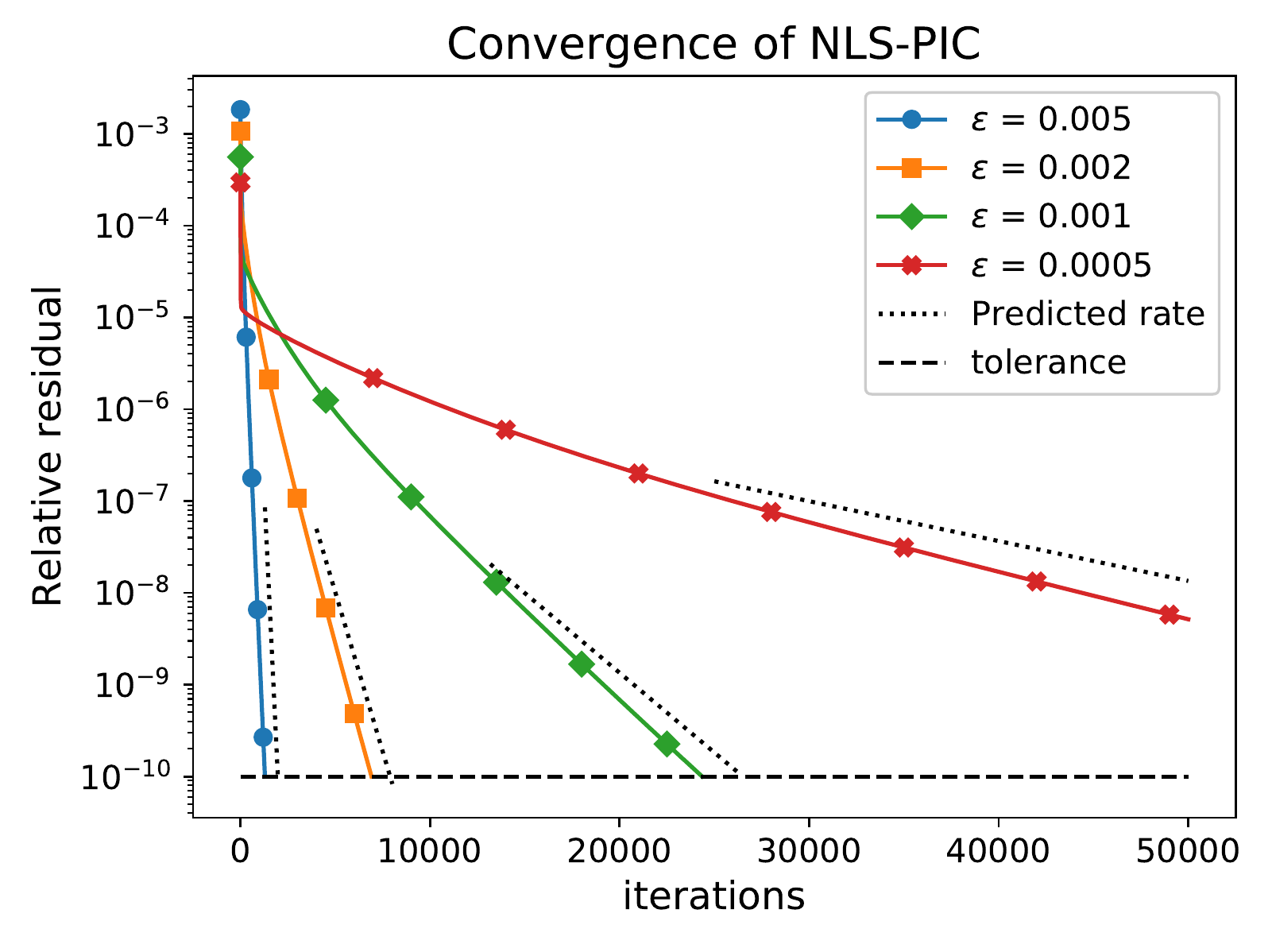}
\includegraphics[width=.495\linewidth]{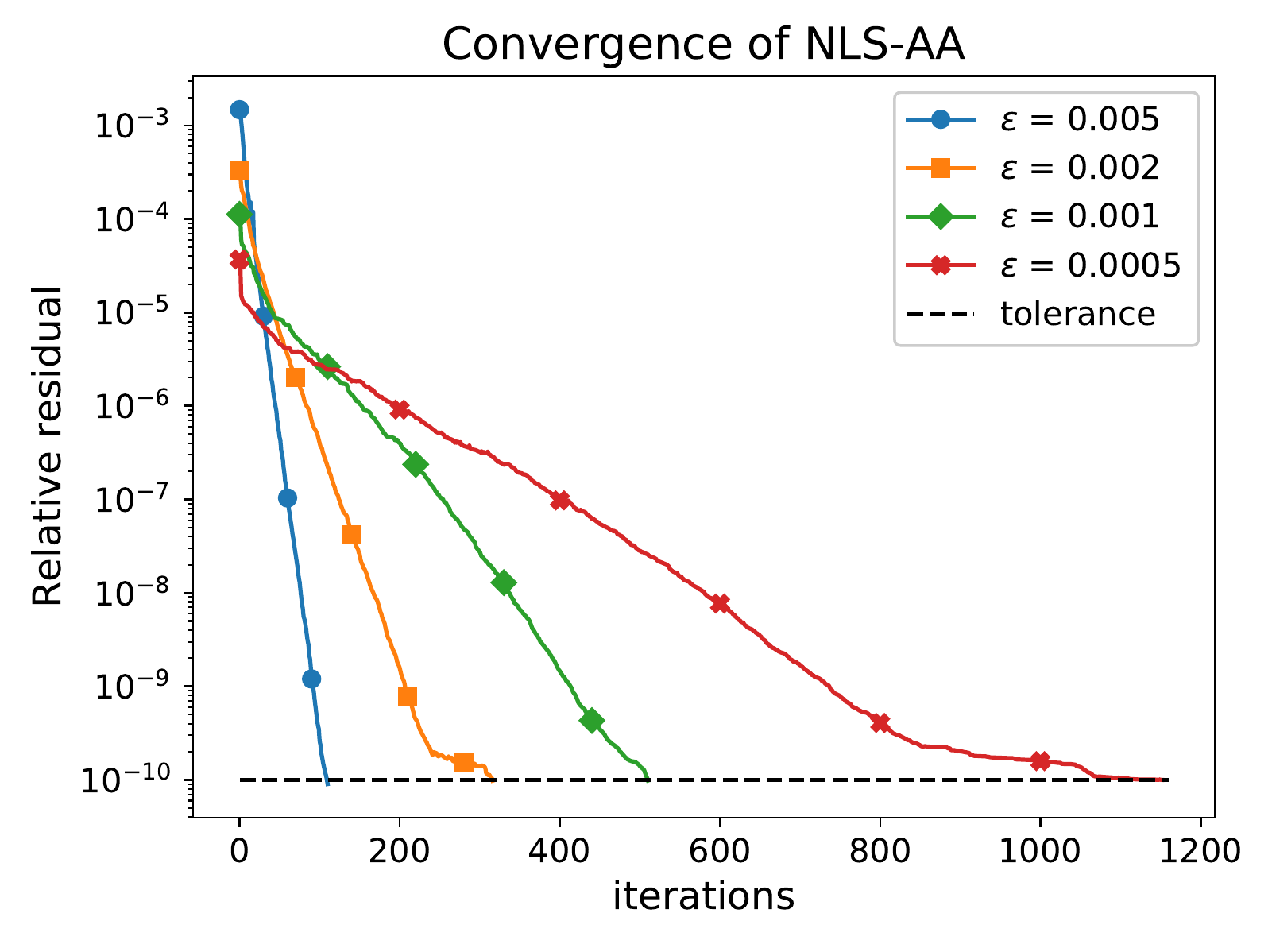}
\includegraphics[width=.495\linewidth]{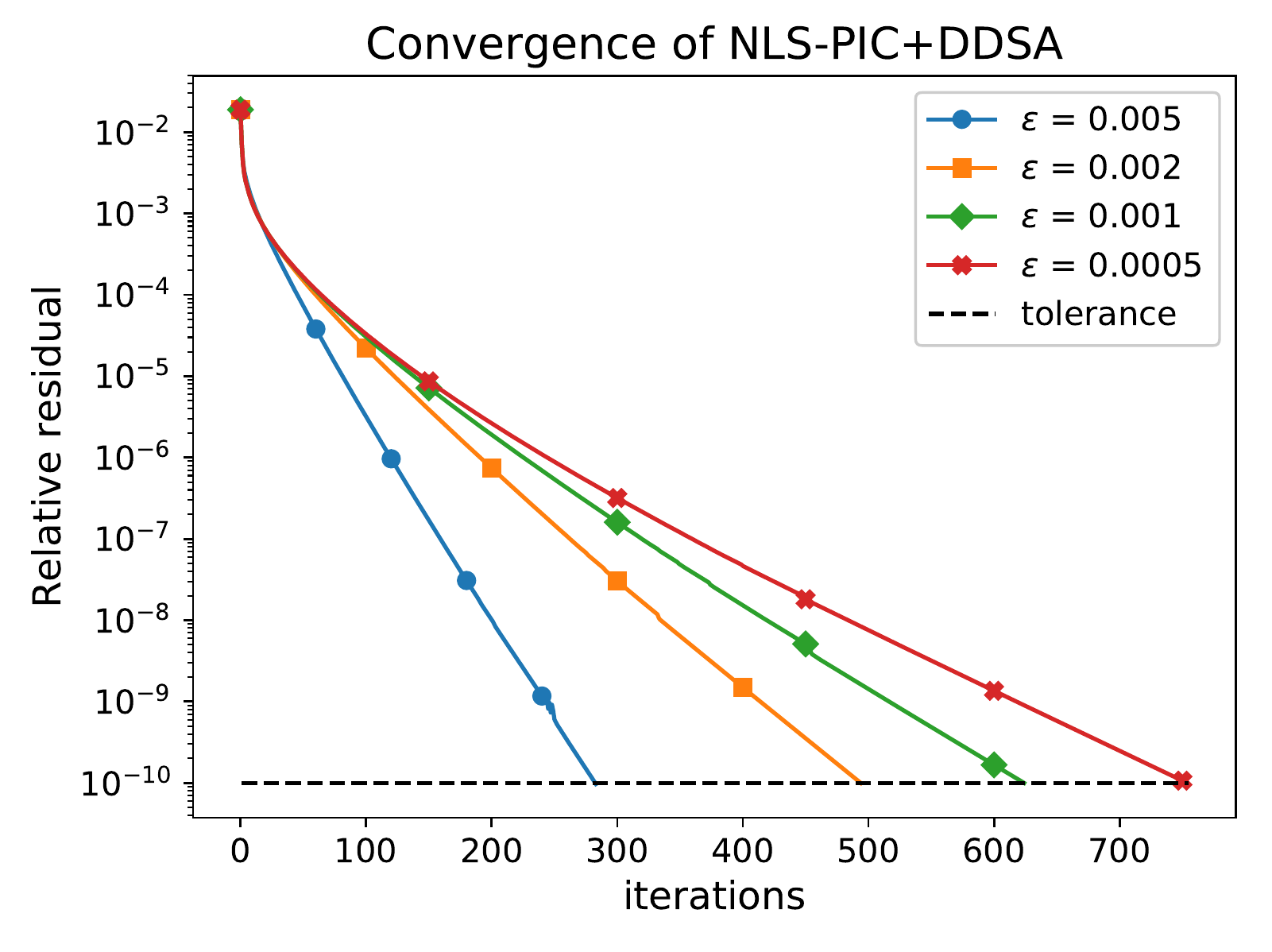}
\includegraphics[width=.495\linewidth]{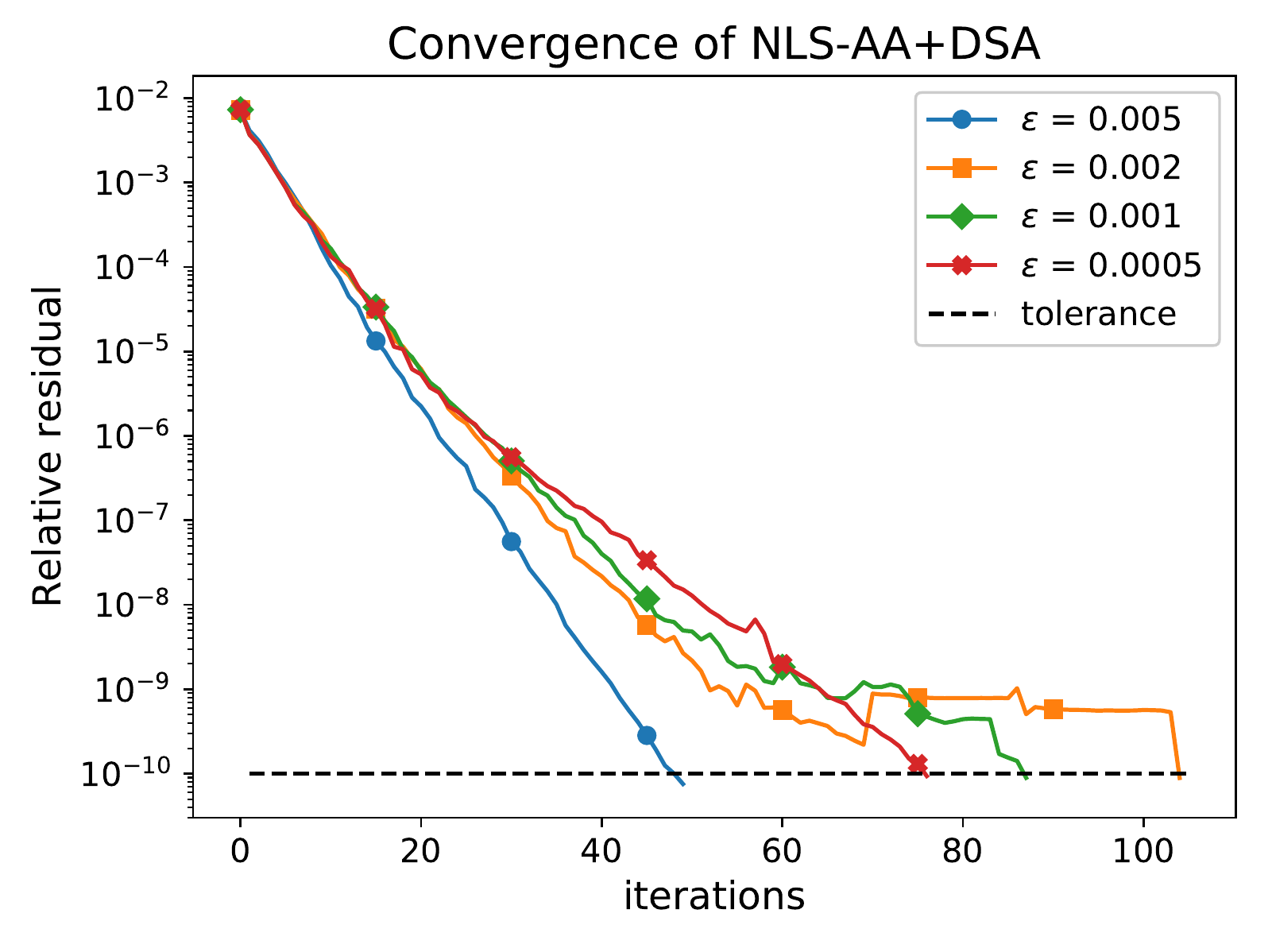}
\caption{Picard iterations converge at the predicted rate given by \eqref{eqn:contraction_constant_prediction}. The other methods shown all converge faster, with NLS-AA+DSA always being the fastest.\label{fig:iterative_convergence_rate}}
\end{figure}
\begin{table}[h]
\centering
\begin{tabular}{|l|l|ll|ll|ll|}
\hline
  & NLS-PIC & \multicolumn{2}{c|}{NLS-AA}  & \multicolumn{2}{c|}{NLS-PIC+DDSA}   & \multicolumn{2}{c|}{NLS-AA+DDSA}  \\
$\varepsilon$ & iterations & iterations & gain & iterations & gain  & iterations & gain \\
\hline
0.005  &  1293 & 112 & 11.5 & 285 & 4.5 & 51 & 25.4  \\
0.002  &  6914 & 318 & 21.7 & 496 & 13.9 & 106 & 65.2  \\
0.001  &  24357 & 512 & 47.6 & 626 & 38.9 & 89 & 273.7  \\
0.0005  &  84681 & 1154 & 73.4 & 756 & 112.0 & 78 & 1085.7  \\
\hline
\end{tabular}
\caption{Number of sweeps in first timestep. The gain represents the ratios of sweeps required by NLS-PIC compared to the method in the column. NLS-AA+DDSA required the fewest iterations, and required 1000 times fewer sweeps than NLS-PIC for the smallest $\varepsilon$. See Figure \ref{fig:iterative_convergence_rate} for the residual vs iteration.\label{tab:eff_gain}}
\end{table}

In this Section, we verify the analytic estimate of the spectral radius of NLS-PIC without DDSA from \eqref{eqn:contraction_constant_prediction} for several different $\varepsilon$. We use the same problem configuration as the single scale tests in Sections \ref{sec:test_single_scale} with the relative tolerance lowered to $10^{-10}$ and fixed timestep $\Delta t = 0.0025$. Only the first time step is used for this test, and it is taken with implicit Euler.  For these same parameters, we also apply AA, DDSA, and AA+DDSA. While NLS-AA still converges using the 200$\times$200 phase space mesh, we observe smoother convergence when the mesh is refined to 1000$\times$1000, which is used in all tests reported in this section. 

The relative residual versus iteration count is shown in Figure \ref{fig:iterative_convergence_rate}. A reference slope of the predicted convergence rate from \eqref{eqn:contraction_constant_prediction} is shown next to each Picard iteration experiment, and they show good agreement. The application of AA, DDSA, and AA+DDSA improves the convergence rates for all tested values of $\varepsilon$. AA+DDSA is always the most efficient, as shown in Table \ref{tab:eff_gain}.

\subsection{Verification of spatio-temporal convergence rate\label{sec:convergence_rate}}

In this section, we test the spatial and temporal convergence rates of the methods with implicit Euler and BDF2 using the NLS solver. We use a manufactured solution similar to one used for Vlasov-Possion simulations in \cite{ROSSMANITH20116203} and \cite{Garrett2018AFS}. To be consistent with the drift-diffusion limit, we enforce the solution to be symmetric in $v$ by writing the solution in terms of a Maxwellian, i.e.,
\begin{equation}
\label{eqn:man_soln}
f(x,v,t) = \frac{1}{2}\sqrt{\pi}\left( 2 - \cos(2x-2\pi t)  \right)M_{\frac{1}{8}}(v), 
\qquad 
E = -\frac{\sqrt{\pi}}{4}\sin(2x-2\pi t).
\end{equation}
\begin{table}[h]
\centering
\begin{tabular}{|l|ll|ll|ll|ll|}
\hline
 & \multicolumn{4}{c|}{Implicit Euler} & \multicolumn{4}{c|}{BDF2}\\
 \hline
 &  \multicolumn{2}{|c|}{$f$}  & \multicolumn{2}{|c|}{$E$}  & \multicolumn{2}{|c|}{$f$}  & \multicolumn{2}{|c|}{$E$} \\ 
$\mathbf{L}$ & error & rate & error & rate & error & rate & error & rate \\ 
\hline
\multicolumn{9}{|c|}{$\varepsilon = 1$}\\
\hline
1 & 1.72E-01  &  ---  & 2.78E-01  &  ---  & 1.39E-01  &  ---  & 2.15E-01  & ---  \\
2 & 8.13E-02  &  1.08  & 1.42E-01  &  0.97  & 3.89E-02  &  1.84  & 5.73E-02  &  1.91 \\
3 & 3.99E-02  &  1.03  & 7.21E-02  &  0.98  & 9.96E-03  &  1.97  & 1.46E-02  &  1.97 \\
4 & 1.99E-02  &  1.00  & 3.63E-02  &  0.99  & 2.50E-03  &  1.99  & 3.68E-03  &  1.99 \\
5 & 9.98E-03  &  1.00  & 1.82E-02  &  0.99  & 6.26E-04  &  2.00  & 9.20E-04  &  2.00 \\
6 & 5.00E-03  &  1.00  & 9.13E-03  &  1.00  & 1.57E-04  &  2.00  & 2.30E-04  &  2.00\\
\hline
\multicolumn{9}{|c|}{$\varepsilon = 0.001$}\\
\hline
1 & 2.45E-01  &  ---  & 3.52E-01  &  ---  & 2.27E-01  &  ---  & 2.42E-01  & ---  \\
2 & 6.21E-02  &  1.98  & 1.65E-01  &  1.09  & 3.52E-02  &  2.69  & 6.21E-02  &  1.96 \\
3 & 2.88E-02  &  1.11  & 8.31E-02  &  0.99  & 8.97E-03  &  1.97  & 1.59E-02  &  1.97 \\
4 & 1.41E-02  &  1.03  & 4.16E-02  &  1.00  & 2.25E-03  &  2.00  & 3.98E-03  &  2.00 \\
5 & 7.01E-03  &  1.01  & 2.08E-02  &  1.00  & 5.63E-04  &  2.00  & 9.94E-04  &  2.00 \\
6 & 3.50E-03  &  1.00  & 1.04E-02  &  1.00  & 1.41E-04  &  2.00  & 2.47E-04  &  2.01 \\
\hline 
\end{tabular}
\caption{\label{tab:temporal_rates}$\mathbf{L}$ represents the refinement level. The overall $L^2$ errors are first order for implicit Euler, and second order for BDF2. The errors are bounded independent of $\varepsilon$.}
\end{table}
The parameters in \eqref{eqn:model} are $\omega=1$, $D = \sqrt{\pi}$ and $\Theta = 1/8$. We also use an $\varepsilon$-dependent source term to yield the manufactured solution:
\begin{equation}
q(x,v,t) = \varepsilon^{-1}2 \exp( -4v^2) \sin\left(2 x - 2 \pi t \right) \left( v - \veps \pi + v\sqrt{\pi}\left(2-\cos(2 x - 2\pi t) \right) \right).
\end{equation}
In addition, the computational domain has been changed to  $\comp = [-\pi,\pi]^2$, and we use periodic boundary conditions at $x = \pm \pi$ and zero inflow boundary conditions at $v = \pm \pi$, since the exact solution approaches zero rapidly in $v$. To implement periodic boundary conditions, we slightly modify Algorithm $\ref{alg:nls}$ by appending the inflow boundary condition of $f_h$ to the vector $(\rho_h, \hat{f}_h)$, and setting the inflow values equal to the outflow values at the same $v$ coordinate from the previous iteration.   (See \cite{Garrett2018AFS} for details.)

We initialize the numerical solution $\Pi_h$ applied to the exact solution \eqref{eqn:man_soln} at $t=0$ and measure the relative error at a final time of $T_f=1$ for both the electron density $f_h$ and the electric field $E_h$. The error for $f_h$ is measured in the $L^2(\comp)$ norm, and the error for $E_h$ is measured in the $L^2(X)$ norm.  
We compute the solution at different refinement levels by successively doubling the number of cells in the $x$ and $v$ directions while halving the size of the timesteps. We perform this convergence study for $\varepsilon = 1$ and $\varepsilon = 10^{-3}$. The relative tolerance of NLS is lowered to $10^{-10}$ because a relative tolerance of $10^{-8}$ was not sufficient to obtain second order convergence when $\varepsilon = 10^{-3}$ (see the discussion about false convergence in Section~\ref{sec:test_disc_details}). 

The results reported in Table \ref{tab:temporal_rates} show that, for both $f_h$ and $E_h$, the convergence rates of implicit Euler and BDF2 are first- and second-order, respectively. This indicates that a piecewise constant electric field may be sufficient for overall second-order convergence, although a rigorous analysis is still required.

\section{Conclusions}

We have derived a new energy-based proof of stability for implicit  time discretizations for a simplified Boltzmann-Poisson model.  At the continuum level, the  proof establishes an $\varepsilon$-independent growth factor, with weights in the $L^2$ energy that do not depend on time.  Thus, we can apply standard $G$-Stability theory for linear multistep methods and guarantee stability under an $O(1)$ timestep restriction. 

We have also proposed a new iterative solver, NLS, and proved its convergence under the assumption of a fixed electric field. The task of proving convergence of this solver with a self-consistent electric field is left for future work. We have derived an accelerator using the drift-diffusion limit as a low order model for correcting the NLS solver. We have demonstrated numerically that the NLS-based methods are more efficient across a range of problems than one previously developed in \cite{laiu2019fast}. 

Other future work includes testing the NLS solver on higher dimensions, exploring how to parallelize it, and modifying the sweeping procedure to allow the use of higher order approximations of the  electric field.   In addition, we intend to embed the NLS solver into a hybrid formulation originally developed for radiation transport problems \cite{hauck2013collision,crockatt2017arbitrary,crockatt2019hybrid,heningburg2020hybrid,crockatt2020improvements}.

\bibliographystyle{siamplain}
\bibliography{semiconductor.bib}
\pagebreak

\end{document}